\documentclass[12pt]{amsart}



\usepackage{amssymb}

\usepackage{enumitem}


\makeatletter
\@namedef{subjclassname@2020}{%
  \textup{2020} Mathematics Subject Classification}
\makeatother

\newtheorem{lem}{Lemma}[section]
\newtheorem{prop}{Proposition}

\newtheorem{defi}{Definition}[section]
\newtheorem{coro}{Corollary}[section]

\newtheorem{theo}{Theorem}

\newcommand{\dist}{{\bf dist}}

\newcommand{\C}{{\bf C}}
\newcommand{\D}{{\bf D}}

\newenvironment{block}[1]{\trivlist\item[\hskip \labelsep{{#1}.}]}{\endtrivlist}

\newtheorem{com}{Comment}
\newtheorem{exa}{Example}

{\end{enumerate}\end{sc}}













\frenchspacing

\textwidth=13.5cm
\textheight=23cm
\parindent=16pt
\oddsidemargin=-0.5cm
\evensidemargin=-0.5cm
\topmargin=-0.5cm







\begin{document}


\baselineskip=17pt


\title{On the Cauchy transform vanishing outside a compact}

\author{Genadi Levin}
\address{Institute of Mathematics\\ Hebrew University of Jerusalem\\
Givat Ram\\
Jerusalem, Israel}
\email{levin@math.huji.ac.il}

\date{}

\begin{abstract}
Motivated by a problem in holomorphic dynamics, we present a certain generalization of the celebrated F. and M. Riesz Theorem.
\end{abstract}

\subjclass[2020]{Primary 30E10; Secondary 37F50}

\keywords{Cauchy transform, F. and M. Riesz Theorem, rational approximation, rotation domain}

\maketitle

\section{Introduction}\label{intro}
Given a finite complex measure $\nu$ with a compact support on $\C$, let
$$\hat\nu(z)=\int\frac{d\nu(w)}{w-z}$$
be the Cauchy transform of $\nu$. For the following facts, see e.g. \cite{garnett}: $\hat\nu$ is locally in $L^1(dxdy)$, $\hat\nu$ exists almost everywhere on $\C$ and holomorphic outside
of the compact support supp($\nu$) of $\nu$, and $\nu(\infty)=0$. Moreover, if for an open set $U$ and an analytic on $U$ function $h$, $h=\nu$ $dxdy$-almost everywhere on $U$, then $|\nu|(U)=0$


The F. and M. Riesz Theorem asserts that, given a measure $\nu$ on the unit circle $S^1$,
if $\int_{S^1}w^n d\nu(w)=0$ for all $n=1,2,\cdots$, in other words, if the Cauchy transform $\hat\nu$ of $\nu$ vanishes outside the closed unit disk, then $\nu$ is absolutely continuous w.r.t. the Lebesgue measure on $S^1$.
Theorem~\ref{omeganu}, see below,
is a generalization of this theorem to the union of pairwise disjoint bounded finitely connected domains.

We use the following notations and terminology. Given a compact subset $K$ of the plane,
$A(K)$ is the algebra of all continuous function on $K$ which are analytic in the interior of $K$
and $R(K)$ is the algebra of uniform limits on $K$ of rational functions with poles outside $K$ ($=$uniform limits on $K$ of functions holomorphic on $K$). If $R(K)=A(K)$, we call $K$ a {\it A-compact}, and if a A-compact $K$ is nowhere dense, $K$ is a {\it C-compact} (C=continuous since in this case $A(K)=C(K)$, the set of all continuous functions on $K$).
Given an open bounded set $U$, let $A(U)$ be the algebra of all analytic in $U$ functions which extend continuously to $\overline U$.

Necessary and sufficient conditions for a compact in the plane to be A- or C-compact are given by Vitushkin \cite{Vit}. Here are some sufficient conditions \cite{gamelin}, \cite{Vit}.
$K$ is a C-compact if the area of $K$ is zero. $K$ is C-(respectively, A-)compact if every point of $K$ (respectively, every point of $\partial K$) belongs
to the boundary of a component of the complement $\C\setminus K$.
Besides, the boundary of a A-compact is always
a C-compact, see e.g. \cite{gamelin}, though not the opposite.

We would like to know about non-trivial measures $\nu$ supported on the boundary $\partial K$ of $K$ such that $\hat\nu=0$ outside $K$.
It is well known \cite{gamelin}, that $\hat\nu=0$ off $K$ if and only if $\int f d\nu=0$ for all $f\in R(K)$.
This implies that if $K$ is nowhere dense, such a non-trivial measure $\nu$ exists if and only if $K$ is not a C-compact.

Let us formulate a question motivated by a problem in holomorphic dynamics.
Given a nowhere dense compact $E$,
let $V=\cup_k\Omega_k$ be the union of some non-empty collection $\{\Omega_k\}$ of bounded components of the complement $\C\setminus E$. The problem we are interested in is the following:

Describe the sets $V$ and $E$ for which any measure $\nu$ supported on $E$ and such that
$\hat\nu=0$ in $\C\setminus (E\cup V)$ is in fact supported on $\cup_k\partial\Omega_k$ and absolutely continuous w.r.t. harmonic measures of $\Omega_k$.



We prove in Theorem \ref{omeganu}
that this is the case
if each component $\Omega_k$ is finitely connected without isolated points in its boundary and the following three conditions hold: (I) $\{\Omega_k\}$ is a so-called
D-collections, (II) $A(\overline V)=A(V)$, (III): (i) $\overline V$ is a A-compact, (ii) $E$ is a C-compact.

Let us comment about (I)-(III), see below for more details. In condition (I), $\{\Omega_k\}$
is said to be a D-collection if harmonic measures of different domains are mutually singular
and for each domain $\Omega$ of the collection, a holomorphic homeomorphism from a bounded circular domain $\Delta$ onto $\Omega$ extends
as a one-to-one map onto a subset of the full (arc) measure to $\partial\Delta$. Condition (II)
means that every continuous in $\overline V$ function which is holomorphic in $V$ is in fact holomorphic in the interior of $\overline V$.
This is obviously the case if Int$(\overline V)=V$. In condition (III), (ii) along with $\hat\nu=0$ off $E\cup V$ imply that
$\nu=0$ on $E\setminus\partial V$ (see Lemma \ref{local}), i.e.,
in fact $\nu$ is supported on $\partial V$. Note that (III)
holds,
for example, if the complement $\C\setminus E$ consists of a finitely many components.

If $E=S^1$, the unit circle, and $V=\D$, the unit disk, then (I)-(III) are satisfied and we recover the F. and M. Riesz theorem.

All conditions of Theorem \ref{omeganu}
turn out to be essential: the conclusion about the measure $\nu$ breaks down in general if one of the conditions (I), (II) or (III) does not hold, see Proposition \ref{condi-ii} along with Examples \ref{sv}-\ref{exIII}.


There exists the abstract F. and M. Riesz Theorem, see \cite{gamelin}, Theorem 7.6. It would be interesting to derive Theorem \ref{omeganu} from this result. The reason we proved Theorem \ref{omeganu}
was to apply it to a problem in holomorphic dynamics, see Corollary \ref{rings} and \cite{lll}.

After completing this note (arXiv:1911.05336, Nov 13, 2019) we found Erret Bishop's papers \cite{Bish1} and its sequel \cite{Bish2}. The main Theorem 3 of \cite{Bish2} is a particular case of
Theorem \ref{omeganu} of the present note in the case when the boundary $E$ of $E\cup V$ is equal to the boundary of the unbounded component of the complement to $E\cup V$ (it is fairly easy to see that this implies all conditions (I)-(III) to be hold). 
In the concluding Remark III of \cite{Bish2} E. Bishop asks whether the analog
of his Theorem 3 holds in a more general setting noting that "this seems to be a difficult question" and that it is clear that some extra hypotheses are necessary. Our Theorem \ref{omeganu} thus answers partially this question.

For another line of development of F. and M. Riesz Theorem and \cite{Bish1}-\cite{Bish2}, see more recent \cite{kha}, \cite{kha1} by Dmitry Khavinson.
Theorem \ref{omeganu} of the present paper is close in spirit to \cite[Theorem 1]{kha1}. While we study measures on $\partial X$ of a compact $X$ by uniformazing components of the interior of $X$, in \cite{kha}, \cite{kha1} the author takes an approach in which
measures on $\partial X$ are approximated from outside $X$. Theorem 1 of \cite{kha1} essentially states that given a compact $X$ for which the Dirichlet problem is always solvable, every measure $\mu$ on $\partial X$ such that $\hat\mu=0$ off $X$ is a weak-$^*$ limit of $\{\mu_n\}$ with $||\mu_n||\le ||\mu||$ and $d\mu_n=f_n(z)dz|_{\partial X_n}$, for any decreasing to $X$ sequence $X_n$($\supset X)$ of compacts with analytic boundaries and some $f_n\in R(X_n)$.

Let's remark finally that we try to keep the proofs as self-contained and elementary as possible.
\section{Statements}\label{st}
\subsection{Main result and (counter-)examples}
All measures unless stated otherwise are assumed to be complex and finite. Given two measures $\lambda$ and $\mu$ where $\mu$ is positive we write
$\lambda\ll\mu$
if $\lambda$ is absolutely continuous w.r.t. $\mu$, and
$\lambda_1\perp\lambda_2$
for two mutually singular measures $\lambda_1$,$\lambda_2$.
Given a bounded plane domain $\Omega$ (i.e., a connected open set of the plane) let $\omega_\Omega$
denote the harmonic measure on $\partial\Omega$ of the domain $\Omega$ w.r.t. a fixed point in $\Omega$.

Recall that a domain of the plane is {\it circular} if its boundary consists of a finite number of disjoint circles. An example is the unit disk $\D$. Given a finitely connected bounded domain $\Omega$ whose boundary contains no isolated points,
it is a classical result that there exist a bounded circular domain $\Delta_\Omega$ and a conformal homeomorphism
$$\psi_\Omega: \Delta_\Omega\to \Omega.$$
By the Fatou Theorem on radial limits, for (Lebesgue) almost every point $w\in \partial \Delta_\Omega$, the radial boundary value $\psi_\Omega(w)$ is defined.
\begin{defi}\label{d-coll} (cf. \cite{glick}, \cite{chrisbishop})
Given a (finite or infinite) collection $\{\Omega_i\}$ of pairwise disjoint finitely connected domains without isolated points on their boundaries,
we call $\{\Omega_i\}$ a {\bf D-collection} (D=Davie, see \cite{Da})
if: each $\psi_{\Omega_i}$ extends radially as a one-to-one map on a set of full measure of the boundary of $\Delta_{\Omega_i}$, and $\omega_{\Omega_i}\perp\omega_{\Omega_j}$ for $i\neq j$.
\end{defi}
Main result is
\begin{theo}\label{omeganu}.
Let $V$ be a bounded open set such that each component of $V$ is finitely connected without isolated boundary points. Let $\{\Omega_i, \kappa_i\}_{i=1}^N$, $1\le N\le \infty$ be a set of couples where $\{\Omega_i\}_{i=1}^N$ is a collection of all components of $V$ and, for each $i$,  $\kappa_i$ is a holomorphic function on $\Omega_i$.
For each $i$, let us fix a uniformization $\psi_{\Omega_i}:\Delta_{\Omega_i}\to\Omega_i$.
Let $E\subset\C$ be a compact set without interior points such that $E\subset\C\setminus V$ and $E\supset\partial V$.
\begin{enumerate}
\item [{\bf P1.}]
Assume that
\begin{enumerate}
\item [(I)] $\{\Omega_i\}_{i=1}^N$ is a D-collection,
\item [(II)] $A(V)=A(\overline V)$,
\item [(III)] (i) $\overline V$ is a A-compact, (ii) $E$ is a C-compact.
\end{enumerate}
Then (a) implies (b) where:
\begin{enumerate}
\item [(a)] there exists a measure $\nu$ supported in $E$
such that:
\begin{equation}\label{omegahatnu}
\hat\nu(z)=\left\{\begin{array}{ll}
\kappa_i(z) & \mbox{ if } z\in\Omega_i, \mbox{ for each } i \\
0 & \mbox{ if } z\in\C\setminus(E\cup V)
\end{array}
\right.
\end{equation}
\item [(b)] for every $i$,
\begin{equation}\label{omegakappa}
||\kappa_i||:=\limsup_{\epsilon\to 0}\int_{\partial\Omega_{i,\epsilon}}|\kappa_i(z)||dz|<\infty
\end{equation}
where $\Omega_{i,\epsilon}=\psi_{\Omega_i}(\{w: \dist(w,\partial\Delta_{\Omega_i})>\epsilon\})$.
Moreover,
\begin{enumerate}\label{concl}
\item [(b1)]
\begin{equation}\label{concl2}
\sum_{i=1}^N ||\kappa_i||<\infty,
\end{equation}
\item [(b2)] the following representation holds:
\begin{equation}\label{nurepres}
\nu=\sum_{i}^N \nu_i
\end{equation}
where $\{\nu_i\}_{i=1}^N$ are pairwise mutually singular measures, $\nu_i$ is a measure on $\partial\Omega_i$ such that $\nu_i\ll\omega_{\Omega_i}$ and
$||\kappa_i||=||\nu_i||$, the total variation of $\nu_i$. In particular, $\nu$ has no atoms.
\end{enumerate}
\end{enumerate}
\item[{\bf P2}.] Vice versa, assume that (\ref{omegakappa})-(\ref{concl2}) hold. Then there exists a measure $\nu$ supported on $\cup_{k=1}^N\partial\Omega_k$ such that
\begin{equation}\label{omegahatnuback}
\hat\nu(z)=\left\{\begin{array}{ll}
\kappa_i(z) & \mbox{ if } z\in\Omega_i, \mbox{ for each } i \\
0 & \mbox{ if } z\in\C\setminus\overline{V}
\end{array}
\right.
\end{equation}
Moreover, if harmonic measures on different $\Omega_k$ are mutually singular, then $\nu$ admits a representation (\ref{nurepres}) as in (b2).
\end{enumerate}
\end{theo}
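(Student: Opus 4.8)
The plan is to handle \textbf{P1} (the F. and M. Riesz direction) first, then the easier converse \textbf{P2}. For \textbf{P1}, I would begin with a reduction to the boundary. Condition (III)(ii) says $E$ is a C-compact, so Lemma \ref{local} together with $\hat\nu=0$ off $E\cup V$ forces $\nu$ to vanish on $E\setminus\partial V$; hence $\nu$ is carried by $\partial V=\cup_i\partial\Omega_i$. Consequently $\hat\nu$ is holomorphic off $\partial V$, and since it vanishes on the open dense subset $\C\setminus(E\cup V)$ of $\C\setminus\overline V$ (recall $E$ has empty interior) it vanishes on all of $\C\setminus\overline V$. By the standard fact recalled in the introduction, $\hat\nu=0$ off $\overline V$ is equivalent to $\int f\,d\nu=0$ for all $f\in R(\overline V)$. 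Now (III)(i) gives $R(\overline V)=A(\overline V)$ and (II) gives $A(\overline V)=A(V)$, so $\nu$ annihilates $A(V)$, i.e. every function holomorphic in $V$ and continuous on $\overline V$.

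Next I would analyze a single component $\Omega=\Omega_i$ through its uniformization $\psi=\psi_{\Omega_i}:\Delta_{\Omega_i}\to\Omega$. The D-collection hypothesis (I) supplies a radial boundary extension of $\psi$ that is one-to-one on a full-measure subset of $\partial\Delta_{\Omega_i}$ and maps it onto a set of full harmonic measure in $\partial\Omega$; this allows me to transport $\nu$ and $\omega_\Omega$ between $\partial\Delta_{\Omega_i}$ and $\partial\Omega$. Composing the annihilation relation with $f\mapsto f\circ\psi$ converts it into the statement that the transported measure annihilates the algebra on the circular domain $\Delta_{\Omega_i}$, and the classical F. and M. Riesz theorem (applied circle-by-circle to the finitely many boundary circles, equivalently the Smirnov/$E^1$ theory) yields absolute continuity with respect to arc length and identifies the transported measure as the boundary measure of $(\kappa\circ\psi)\psi'$. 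Pushing back by $\psi$ gives $\nu_i\ll\omega_{\Omega_i}$, identifies $\nu_i$ as the boundary-value measure of $\kappa_i$, and in particular yields $\|\kappa_i\|<\infty$ with the total-variation identity $\|\kappa_i\|=\|\nu_i\|$ of (b2).

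For the assembly, the mutual singularity $\omega_{\Omega_i}\perp\omega_{\Omega_j}$ of (I) is decisive: since each $\nu_i\ll\omega_{\Omega_i}$, the pieces are automatically mutually singular, so restricting $\nu$ to the carrier of each $\omega_{\Omega_i}$ is unambiguous even when the sets $\partial\Omega_i$ overlap. This produces $\nu=\sum_i\nu_i$ with the stated properties; summing total variations gives $\sum_i\|\kappa_i\|=\sum_i\|\nu_i\|=\|\nu\|<\infty$, and non-atomicity follows because harmonic measure has no atoms. I expect the \emph{main obstacle} to lie exactly here and in the previous step, namely disentangling $\nu$ into the $\nu_i$ when boundaries coincide as sets, and legitimately transferring the disk F. and M. Riesz theorem through the merely radial (non-conformal at the boundary) maps $\psi_i$. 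The two clauses of the D-collection hypothesis, one-to-one radial boundary values and mutual singularity of harmonic measures, are precisely what is needed to overcome it.

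For the converse \textbf{P2}, finiteness of $\|\kappa_i\|$ says that $F_i:=(\kappa_i\circ\psi_i)\psi_i'$ lies in the Smirnov class $E^1(\Delta_{\Omega_i})$, hence has boundary values $F_i^*\in L^1(|dw|)$. I would define $\nu_i$ as the pushforward under the boundary map $\psi_i$ of $F_i^*\,dw$, normalized so that the residue computation below gives $\hat\nu_i=\kappa_i$, and set $\nu=\sum_i\nu_i$, the series converging in total variation by $\sum_i\|\kappa_i\|<\infty$ and supported on $\cup_i\partial\Omega_i$. For $z\in\Omega_i$ the integrand $w\mapsto F_i^*(w)/(\psi_i(w)-z)$ has a single simple pole in $\Delta_{\Omega_i}$, at the $\psi_i$-preimage of $z$, so Cauchy's formula gives $\hat\nu_i(z)=\kappa_i(z)$; for $z\notin\overline{\Omega_i}$ the integrand is $E^1$ and its integral over $\partial\Delta_{\Omega_i}$ vanishes, so $\hat\nu_i(z)=0$. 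Since $z\in\Omega_j$ lies outside $\overline{\Omega_i}$ for $i\neq j$, and every term vanishes when $z\notin\overline V$, the transform $\hat\nu$ has the asserted form. When the $\omega_{\Omega_k}$ are mutually singular, $F_i^*\,dw\ll|dw|$ and the comparability of $|dw|$ with harmonic measure on $\Delta_{\Omega_i}$ give $\nu_i\ll\omega_{\Omega_i}$, yielding the representation of (b2); the only delicate point is justifying the Cauchy/residue calculus for $E^1$ functions on multiply connected circular domains.
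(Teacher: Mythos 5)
Your reduction to $E=\partial V$ via Lemma \ref{local} and your treatment of \textbf{P2} (which is essentially Lemma \ref{omegakappa1} of the paper) are sound. The genuine gap is in the central step of \textbf{P1}: you propose to ``transport'' $\nu$ to $\partial\Delta_{\Omega_i}$ through the radial boundary map of $\psi_i$ and then apply the classical F.\ and M.\ Riesz theorem circle-by-circle. This cannot be done as stated, for two reasons. First, before absolute continuity is known, $\nu|_{\partial\Omega_i}$ is an arbitrary (possibly singular, possibly atomic) measure on $\partial\Omega_i$, and there is no meaningful pull-back of such a measure to $\partial\Delta_{\Omega_i}$: the boundary extension of $\psi_i$ is only defined a.e.\ with respect to \emph{arc length} on $\partial\Delta_{\Omega_i}$, i.e.\ with respect to harmonic measure, so the ``transported measure'' exists only after you already know $\nu_i\ll\omega_{\Omega_i}$ --- which is the conclusion. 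Second, even the annihilation relation $\int f\,d\nu=0$ for $f\in A(V)$ cannot be localized to a single component by inspection: the test function you would need (equal to $\tilde h\circ\psi_i^{-1}$ on $\Omega_i$ and $0$ on the other components) is generally \emph{not} in $A(\overline V)$ when the boundaries $\partial\Omega_i$ touch or accumulate on one another. Overcoming precisely this obstruction is where the paper invokes Davie's bounded pointwise approximation theorem (Theorem \ref{appr}, via Corollary \ref{apprcoro}); this is the actual role of the D-collection hypothesis (I) together with (II) and III(i), whereas in your outline (I) is only used for the a posteriori disentangling of the $\nu_i$.

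The paper's route is structurally different at this point and you would need to reproduce it (or find a substitute): Havin's duality theorem (Theorem \ref{H}) gives a uniform constant $\tilde C$ bounding the functionals $h\mapsto\int g_\kappa h\,dz$; the approximation of Corollary \ref{apprcoro} transfers that bound to test functions supported near finitely many $\partial\Delta_k$; Tumarkin's theorem (Theorem \ref{T}, via Lemma \ref{Tcoro}) then yields the $H^1$ bound $\|\kappa_k\|<\infty$ and, since $\tilde C$ is independent of the number of components, the summability (\ref{concl2}). Only then is the candidate measure $\nu_w=\sum_k\nu^{\kappa_k}$ \emph{constructed from scratch} by Lemma \ref{omegakappa1}, and the identity $\nu=\nu_w$ is obtained by noting that $\widehat{\nu-\nu_w}=0$ off the C-compact $E$ --- avoiding altogether your step of ``restricting $\nu$ to the carrier of each $\omega_{\Omega_i}$,'' which presupposes that $\nu$ charges nothing outside those carriers, again part of what must be proved.
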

Notice the case $E=\partial V$. Then III(i) implies III(ii).

Applying Theorem \ref{omeganu} with $E=\partial V$ and
$\kappa_i=1$ for all $i$ we get an answer to the Problem 4.2, p.55, \cite{garnett} for the sets $V$ that satisfy conditions (I)-(III): Let $V=\cup_k\Omega_k$ be a bounded open set such that its components $\{\Omega_k\}$ form a D-collection and $A(V)=A(\overline V)=R(\overline V)$. Then there is a measure $\mu$ on $\partial V$ such that
$\hat\mu=1$ in $V$ and $\hat\mu=0$ off $\overline V$ if and only if for each $k$ the linear measure $\Lambda(\partial\Omega_k)$ of $\partial\Omega_k$ is finite and $\sum_k\Lambda(\partial\Omega_k)<\infty$.

Given a measure supported on a compact in the closed unit disk $\overline\D$, the unit circle can be used as a "screen"
to kill the Cauchy transform of this measure off $\overline\D$:
\begin{exa}\label{sv} (A. Volberg)
Let $V=\D\setminus K$ where $\D=\{|z|<1\}$ and $K=[0,1]$. Let $\nu_K$ be a measure support in $K$.
Assume that $\hat\nu_K(e^{it})\in L^1(0,2\pi)$. On the unit circle $S^1=\{|z|=1\}$ with the one-dimensional Lebesgue measure (i.e., the arc length measure $dt$, $0\le t\le 2\pi$), we define a new measure $\nu_c$ so that $d\nu_c(e^{it})=h_c dt$ with the density
$$h_c(e^{it})=\frac{1}{2\pi}e^{it}\hat\nu_K(e^{it}).$$
Let $\nu$ be a (finite) measure on $\partial V$ which is defined as follows: $\nu=\nu_K$ on $K$ and $\nu=\nu_c$ on $S^1$. Then
\begin{equation}\label{exa}
\hat\nu(z)=\left\{\begin{array}{ll}
\hat\nu_K(z) & \mbox{ if } z\in V, \\
0 & \mbox{ if } z\in\C\setminus\overline{V}
\end{array}
\right.
\end{equation}
Indeed, for $z\in\C\setminus (K\cup S^1)$,
$$\hat\nu_c(z)=\int_{S^1}\frac{d\nu_c(w)}{w-z}=
\int_K\frac{d\nu_K(u)}{u-z}\int_{S^1}\frac{dw}{2\pi i(w-z)(u-w)}$$
where the inner integral is equal to $-1$ for $|z|>1$ and $0$ for $z\in\D\setminus K$.
The same calculation, hence, (\ref{exa}) as well, hold if $K\subset\overline\D$ is any compact such that the length of $K\cap\partial\D$ is zero and $\nu_K$ is any measure on $K$
such that $\hat\nu_K(e^{it})\in L^1(0,2\pi)$.
\end{exa}
\begin{prop}\label{condi-ii}
Conditions (I)-(III) of Theorem \ref{omeganu} are necessary for its conclusion: there exist open sets $V$ and measures $\nu$ supported on $E:=\partial V$ such that $V$ is simply connected,
$\hat\nu=0$ off $\overline V$ and $\nu$ has atoms (so not absolutely continuous w.r.t. harmonic measure on $\partial V$) while, in notations of Theorem \ref{omeganu}, for the sets $V$ and $E$ one and only one condition (I), (II), (III) breaks down.
\end{prop}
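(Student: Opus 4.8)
The plan is to establish necessity by producing, for each of the three conditions separately, a simply connected domain $V$ and an atomic measure $\nu$ on $E=\partial V$ with $\hat\nu=0$ off $\overline V$, arranged so that exactly the one condition under consideration fails while the other two persist. In every case the measure is manufactured by the screen construction of Example~\ref{sv}: given a point mass $\nu_K=\delta_{z_0}$ placed on $\partial V$, whose Cauchy transform lies in $L^1$ of each circle used as a screen, one adjoins the screen densities of Example~\ref{sv} on those circles to annihilate $\hat\nu$ outside $\overline V$. The resulting $\nu$ retains the atom $\delta_{z_0}$, so it is not absolutely continuous with respect to harmonic measure, and the conclusion of Theorem~\ref{omeganu} indeed breaks.

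The example isolating (I) is the slit disk $V=\D\setminus[0,1]$ of Example~\ref{sv}, with $\nu_K=\delta_{z_0}$ for some $z_0\in(0,1)$. Condition (I) fails because the two banks of the radial slit correspond to two arcs of positive length on $\partial\D$ on which the uniformizing map is two-to-one, so no full-measure set of injectivity exists. What must be checked is that the other two conditions survive. For (II): the segment $[0,1]$ is a rectifiable arc and is therefore removable for functions continuous on $\overline\D$ and holomorphic off it (collapse a thin rectangle about the slit and apply Morera, the two long sides cancelling in the limit); hence every element of $A(V)$ extends holomorphically across the slit and $A(V)=A(\overline\D)=A(\overline V)$. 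For (III): $\overline V=\overline\D$ is an $A$-compact, and each point of $E=S^1\cup[0,1]$ lies on the boundary of the single complementary component $V$, so $E$ is a $C$-compact.

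The example isolating (III) is a simply connected ``pinched cheese'': delete from $\D$ a null sequence of closed disks $\overline{D_n}$, each internally tangent to $S^1$ at a distinct point, so that no disk is surrounded and $V=\D\setminus\bigcup_n\overline{D_n}$ stays simply connected, with the $\partial D_n$ chosen Swiss-cheese-dense. Since each $\overline{D_n}$ meets $S^1$ only at its point of tangency, the open disks $D_n$ are distinct components of $\C\setminus\overline V$, and the standard Swiss-cheese obstruction makes $\overline V$ fail to be an $A$-compact, so (III) breaks. The countably many tangency points form a set of measure zero, so the uniformizing map stays one-to-one almost everywhere and (I) holds; and since the $D_n$ carry area, $\mathrm{Int}(\overline V)=V$ and (II) holds trivially. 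Screening a point mass placed on one of the circles $\partial D_n$ against all the boundary circles (the tails being controlled by the null sequence of radii) then produces the required atomic $\nu$ on $\partial V$ with $\hat\nu=0$ off $\overline V$.

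The delicate case, which I expect to be the main obstacle, is (II). To break $A(V)=A(\overline V)$ one needs $\mathrm{Int}(\overline V)\supsetneq V$ with the extra set \emph{non-removable}, i.e.\ of positive analytic capacity, the obvious candidate being a positive-area (Osgood) arc $\Gamma$ with $V=\D\setminus\Gamma$. The difficulty is that such an arc, approached by $V$ from both sides, necessarily has positive harmonic measure, whence the uniformizing map is again two-to-one on a positive-measure set and (I) would fail as well. Isolating (II) thus demands engineering the non-removable part of the boundary so that the boundary correspondence stays injective almost everywhere while $\overline V$ remains an $A$-compact; balancing positive analytic capacity against vanishing two-sided harmonic measure is the technical heart of the construction. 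Once the three domains are pinned down, the screen construction supplies the atomic $\nu$ in each, and the necessity of all of (I)--(III) follows.
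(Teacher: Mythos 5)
Your slit-disk example for condition (I) is correct and is exactly the paper's choice ($K=[0,1]$ in Example \ref{sv}), including the verification that (II) and (III) survive. The other two cases, however, have genuine gaps. For (III), your ``pinched cheese'' does not break condition (III) at all: since every deleted disk $\overline{D_n}$ is internally tangent to $S^1$ and the radii tend to $0$, the disks accumulate only on $S^1$, so every point of $\partial\overline V$ lies on the boundary of a component of $\C\setminus\overline V$ (either some $D_n$ or the unbounded component), and by the sufficient condition quoted in Section \ref{intro} this forces $\overline V$ to be an A-compact; the Swiss-cheese obstruction requires the deleted disks to be dense in the interior, which is incompatible with tangency to $S^1$ for a null sequence of radii. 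Moreover, the screen construction cannot even produce the atom in that configuration: every component of $E=\partial V$ is one of the screening circles, and $\hat\delta_{z_0}$ fails to be in $L^1$ of any circle passing through $z_0$, so there is no place on $E$ to put the point mass.

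For (II) you correctly diagnose the tension --- the removed set must carry positive continuous analytic capacity (so that $A(V)\neq A(\overline V)$) while the boundary correspondence of the Riemann map of $V$ stays injective a.e. --- but you stop exactly where the construction is needed. The missing ingredient, which the paper imports from Browder and Wermer \cite{BW}, is a Jordan arc $J$ that is non-removable for continuous functions and such that the Riemann map $h:\C\setminus\overline\D\to\C\setminus J$ extends one-to-one on a set of full measure of $S^1$. Placing $J\subset\overline\D$ with $J\cap S^1=\{1\}$ and setting $V=\D\setminus J$ gives (I) (compose $h$ with a Riemann map of $\D$ onto $h^{-1}(V)$) and (III) (since $\overline V=\overline\D$), while (II) fails; the atom is then supplied by your screen construction. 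The same arc is also the engine of the paper's example for (III) (Example \ref{exIII}): one attaches to $J$ a sequence of disks $D_j$ with $\sum_j r_j/d_j<\infty$ touching $J$ at a dense set of points, screens the point mass $\delta_0$ (with $0$ an endpoint of $J$) on $S^1$ and on each $\partial D_j$ but \emph{not} on $J$ itself, and uses the non-removability of $J$ to conclude that $\overline V=\overline\D\setminus\bigcup_j D_j$ is not an A-compact while (I) and (II) still hold. Without an arc of this kind neither of your two remaining cases can be completed.
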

\begin{proof}
Let $K$ be any nowhere dense compact as in Example \ref{sv} such that $V=\D\setminus K$ is simply connected. Let $E=K\cup S^1=\partial V$.
Since $\overline V=\overline{\D}$, $\overline V$ is a A-compact and $E=\partial V$ is a C-compact, so the condition III holds. On the other hand, taking $\nu_K$ a discrete measure with supp$(\nu)=K$ we see that the conclusion of Theorem~\ref{omeganu} does not hold. It means that at least one of the conditions I-II has to break down.
In particular, for $K=[0,1]$, condition I does not hold while II does so the condition I is necessary indeed.
As for the necessity of condition II, we choose $K=J$ where $J$ is a Jordan arc
such that $V=\D\setminus J$ is a simply connected domain which satisfies the condition I, i.e., the Riemann map
$\psi_V: \D\to V$ extends to a one-to-one map on a set of a full (arc) measure on $S^1$.
Hence, the condition II cannot hold in this case (this follows also directly from Theorem 1' of \cite{BW}, see also recent \cite{bcgj}, \cite{chrisbishop100}).
The existence of such Jordan arc $J$ follows from
Browder and Wermer \cite{BW}. Indeed, in \cite{BW} an example of a Jordan arc $J$ is constructed such that the Riemann map $h:\C\setminus\overline{\D}\to\C\setminus J$ from the complement to the closed unit disk onto the complement to $J$, $h(\infty)=\infty$, extends one-to-one on a set of full Lebesgue measure
on $S^1$. Then it is easy to see that
if we take $J\subset\overline{\D}$, $J\cap S^1=\{1\}$, then $V=\D\setminus J$ satisfies the condition I. [Proof: since $V\subset\C\setminus J$, $h^{-1}(V)$ is a well defined simply connected bounded domain
with pairwise analytic boundary; hence, if $\beta: \D\to h^{-1}(V)$ is a Riemann map then $\psi_V:=h\circ\beta:\D\to V$ extends to a one-to-one-map on a set of a full measure on $S^1$.]

That the condition III is necessary as well, see the following example.
\end{proof}
\begin{exa}\label{exIII}
Let $J$ be the Jordan arc as in \cite{BW}, see the proof of Proposition \ref{condi-ii}.
One can assume that $J\subset\D\cup\{1\}$ and the endpoints of $J$ are $0$ and $1$.
Let $\{D_j\}_{j=1}^\infty$ be a collection of open disks in $\D\setminus J$ with pairwise disjoint closures such that each $D_j$ touches $J$ at precisely one point which is neither $0$ nor $1$, the set of all such points is dense in $J$ and such that
$$\sum_{j=1}^\infty \frac{r_j}{d_j}<\infty$$
where $r_j$ is the radius of $D_j$ and $d_j$ is the distance between $0$ and $D_j$. (It's not difficult to realize that such choice of disks is possible.) Define sets $V$ and $E$ as:
$V=\D\setminus(J\cup\cup_{j=1}^\infty\overline D_j)$ and
$E=\partial V=S^1\cup J\cup\cup_{j=1}^\infty S_j$ where $S_j=\partial D_j$. Since there are non constant continuous on the Riemann sphere functions which are holomorphic in $\C\setminus J$, by \cite{dolzh}, \cite{gamelin} the compact
$\overline V=\overline\D\setminus\cup_{j=1}^\infty D_j$ is not a A-compact, i.e.,
$R(\overline V)\neq A(\overline V)$.

Now we define the measure $\nu$ on $E$ as follows.
Let $\nu=\delta_0+\sum_{j=0}^\infty\nu_{j}$
where $\delta_0$ is the Dirac measure at $0$, $d\nu_0(w)=\hat\delta_0(w)dw$, $w\in S^1$, is a measure on $S^1$ and, for each $j$,
$d\nu_j(w)=-\hat\delta_0(w)dw$, $w\in S_j$, is a measure on $S_j$.
Since for every $j\ge 0$,  $||\nu_j||\le 2\pi r_j/d_j$, then
$$||\nu||\le 1+2\pi(1+\sum_{j=1}\frac{r_j}{d_j})<\infty.$$
Similar to Example \ref{sv}, we get
$\hat\nu=0$ off $\tilde E$ and $\hat\nu=\hat\delta_0$ in $V$. On the other hand, $\nu$ is not absolutely continuous w.r.t. harmonic measure of $V$ because $\nu$ has the atom at $0$.
Notice that $V$ is simply connected and, for the sets $V$, $E$ as above, conditions (I), (II) of Theorem \ref{omeganu} holds but (III) does not: $\overline V$ is not A-compact. Note that at the same time,
$E=\partial V$ is a C-compact as every point of $E$ is at the boundary of the component $V$ of the complement to $E$.
\end{exa}
\subsection{Local removability of C-compacts}
We need the following
\begin{lem}\label{local}
\begin{enumerate}
\item [(a)] Any closed subset of a C-compact is C-compact.
\item [(b)] Let $K$ be a nowhere dense compact in $\C$ and $\mu$ a measure on $K$. Suppose that for a neighborhood $W$ of a point $x\in K$, $K\cap\overline W$ is a C-compact and $\hat\mu=0$ on $W\setminus K$. Then $\mu$ vanishes on
$K\cap W$, i.e., $|\mu|(W)=0$.
\end{enumerate}
\end{lem}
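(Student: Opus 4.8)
The plan is to dispatch (a) by a short extension argument and to prove (b) by localizing the Cauchy transform in the manner of Vitushkin. For (a): if $L$ is a closed subset of a C-compact $K$, then $L$ is compact and has empty interior (an interior point of $L$ would be an interior point of $K$), so $L$ is nowhere dense. Given $g\in C(L)$, extend it by the Tietze theorem to $G\in C(K)$; since $R(K)=C(K)$, there are rational functions with poles in $\C\setminus K\subset\C\setminus L$ converging to $G$ uniformly on $K$, and their restrictions converge to $g$ uniformly on $L$. Hence $g\in R(L)$, so $R(L)=C(L)$ and $L$ is a C-compact.

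For (b) I would use the distributional identity $\dbar\hat\mu=-\pi\mu$. Fix $\phi\in C_c^\infty(W)$ and set
\[
\mu_\phi=\phi\mu-\frac1\pi(\dbar\phi)\,\hat\mu\,dx\,dy .
\]
Since $\hat\mu\in L^1_{loc}$ and $\dbar\phi$ is smooth with compact support, the second term is an $L^1$ density with compact support, so $\mu_\phi$ is a finite measure. A direct computation gives $\dbar(\phi\hat\mu)=(\dbar\phi)\hat\mu-\pi\phi\mu=-\pi\mu_\phi=\dbar\hat{\mu_\phi}$; as $\phi\hat\mu$ and $\hat{\mu_\phi}$ both vanish at infinity, their difference is an entire function that is $0$, i.e. $\hat{\mu_\phi}=\phi\hat\mu$.

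The proof is then finished by two observations about $\mu_\phi$. First, its support lies in $K\cap\overline W$: the part $\phi\mu$ sits on $K\cap\mathrm{supp}\,\phi$, while the density $(\dbar\phi)\hat\mu$ is supported in $\mathrm{supp}\,\phi\subset W$ and vanishes almost everywhere on $W\setminus K$ (because $\hat\mu\equiv0$ there by hypothesis), hence is concentrated on $K\cap\mathrm{supp}\,\phi\subset K\cap\overline W$. Second, $\hat{\mu_\phi}=\phi\hat\mu$ vanishes off $K\cap\overline W$, since outside $\overline W$ we have $\phi=0$, while on $\overline W\setminus K$ either $\phi=0$ (near $\partial W$, as $\mathrm{supp}\,\phi$ is a compact subset of $W$) or $\hat\mu=0$. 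Because $K\cap\overline W$ is a C-compact, the criterion recalled in the introduction—$\hat\nu=0$ off a compact $K'$ iff $\int f\,d\nu=0$ for all $f\in R(K')$, together with $R(K')=C(K')$—forces $\mu_\phi=0$.

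Thus $\phi\mu=\frac1\pi(\dbar\phi)\hat\mu\,dx\,dy$ for every $\phi\in C_c^\infty(W)$. Choosing $\phi\equiv1$ on an open set $U$ with $\overline U\subset W$ kills the right-hand side on $U$ (there $\dbar\phi=0$), so $\mu|_U=0$; exhausting $W$ by such $U$ and using the finiteness and inner regularity of $|\mu|$ yields $|\mu|(W)=0$. The one genuinely delicate step is the first observation above: controlling the support of the area-density part of $\mu_\phi$ is exactly where the hypothesis $\hat\mu=0$ on $W\setminus K$ enters, and it is what lets the C-compactness of $K\cap\overline W$ be brought to bear. The identity $\hat{\mu_\phi}=\phi\hat\mu$ and the Liouville step are routine once $\dbar\hat\mu=-\pi\mu$ is in hand.
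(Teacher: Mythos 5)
Your part (a) coincides with the paper's argument (Tietze extension of $g\in C(L)$ to $C(K)$, then restriction of the rational approximants). For part (b) your proof is correct but takes a genuinely different route. The paper localizes on the side of the approximating functions: it passes to the union $U$ of all components of $\C\setminus K$ that meet $W$ (where $\hat\mu\equiv 0$ by the identity theorem), normalizes so that $\infty\in U$, and uses Bishop's localization theorem together with the C-compactness of $K\cap\overline W$ to show that every continuous $g$ compactly supported in a small ball lies in $R(\C\setminus U)$; the duality $\int R_n\,d\mu=0$ then forces $\int g\,d\mu=0$. You instead localize the measure itself with the Vitushkin operator $\mu\mapsto\mu_\phi=\phi\mu-\frac{1}{\pi}(\dbar\phi)\hat\mu\,dxdy$, establish $\hat{\mu_\phi}=\phi\hat\mu$ from $\dbar\hat\mu=-\pi\mu$ and a Liouville argument, observe that $\mu_\phi$ is carried by the C-compact $K\cap\overline W$ while its Cauchy transform vanishes off it, and conclude $\mu_\phi=0$ from $R(K\cap\overline W)=C(K\cap\overline W)$; taking $\phi\equiv 1$ on an exhausting family of open sets then kills $\mu$ on $W$. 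Both proofs ultimately rest on the same duality fact for C-compacts, but yours trades Bishop's theorem and the M\"obius normalization for the distributional identity and Weyl's lemma, which is arguably shorter. Two small points deserve a sentence in a final write-up: (i) $\hat{\mu_\phi}$ is analytic off $K\cap\overline W$ and equals $\phi\hat\mu=0$ almost everywhere on the complement, whose components all have positive area, so it is identically zero there; and (ii) since a nowhere dense $K$ may have positive area, the term $(\dbar\phi)\hat\mu\,dxdy$ need not vanish on $K$ itself --- your argument correctly claims only support containment for it, and this is exactly where the hypothesis $\hat\mu=0$ on $W\setminus K$ is used.
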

\begin{proof} (a) follows from the fact that any continuous function on a closed subset of a compact extends to a continuous function on the whole compact.
Let us prove (b). Let $U$ be the union of all components of $\C\setminus K$ that intersect $W$.
Then $W\setminus K\subset U$ and $\hat\mu=0$ on $U$. One can assume that $\infty\in U$ as otherwise,
for $M(z)=1/(x_0-z)$ with some $x_0\in U$, we replace $K$ by $\tilde K=M(K)$ and $\mu$ by a measure $\tilde\mu$ on $\tilde K$ such that $d\tilde\mu(w)=wd\mu(x_0-1/w)$. Thus $\infty\in U$.
Now, for $|\mu|(W)=0$ it is enough to prove that for each $x\in K\cap W$ there is a neighborhood $W_x\subset W$ such that
for all continuous with compact support in $W_x$ functions $g$, $\int gd\mu=0$.
So fix $x\in K\cap W$ and choose $W_x=B(x,r)$ where $r>0$ is so that $B(x,2r)\subset W$.
Let $g$ be a continuous function on $\C$ which is compactly supported in $B(x,r)$.
Let $\hat K=\C\setminus U$. It is enough to prove that $g\in R(\hat K)$.
Indeed, assume that there is a sequence of rational functions $R_n$ with poles outside $\hat K$ converging uniformly on $\hat K$ to $g$. Perturbing some of $R_n$ if necessary one can further assume that
all $R_n$ have simple poles. If say $R_n(z)=P(z)+\sum_{j=1}^m \alpha_j/(z-b_j)$ where $P$ is a polynomial and all $b_j\in U$, then
$\int R_n d\mu=\int P d\mu + \sum_{j=1}^m \alpha_j\hat\mu(b_j)=0$ because $\hat\mu=0$ in $U$
and $\int z^n d\mu(z)=0$ for all $n>0$ (this is because $\hat\mu=0$ in a neighborhood of $\infty$).
Then $\int g d\mu=\lim_n\int R_n d\mu=0$.
It remains to show that $g\in R(\hat K)$. We use Bishop's theorem (11.8 of \cite{zalcman}):
{\it Given a compact $X\subset\C$ and a continuous on $\C$ function $f$, assume
that for each $z\in X$ there is a closed neighborhood $B_z=\{|w-z|\le \delta_z\}$, $\delta_z>0$ such that $f|_{X\cap B_z}\in R(X\cap B_z)$. Then $f\in R(X)$.} Applying this to the compact $\hat K$ and the function $g$, if $z\in \hat K\setminus \overline B(x,r)$, then, for $\delta_z=|z-x|-r>0$, $g|_{\hat K\cap B_z}=0\in R(\hat K\cap B_z)$. On the other hand, for
$z\in\overline B(x,r)$ and $\delta_z=r$, $g|_{\hat K\cap B_z}\in R(\hat K\cap B_z)$ because
$\hat K\cap B_z\subset\hat K\cap\overline B(x,2r)\subset K\cap\overline W$ while $K\cap\overline W$
is a C-compact.
Notice that $g\in R(\hat K)$ follows also from Vitushkin's necessary and sufficient condition
for a function to be in $R(X)$, \cite{Vit}.
\end{proof}
\subsection{A particular case: rotation domains}
Let $\{\Omega_i\}$ be a collection of pairwise disjoint Jordan domains such that $\overline\Omega_i\cap\overline\Omega_j$ is at most a single point for all $i\neq j$, $V=\cup_i\Omega_i$ is bounded. Then obviously the conditions I-II of Theorem~\ref{omeganu} hold.

Here is a more interesting case which is originated in holomorphic dynamics. Recall that a simply or doubly connected domain $A$ is called a rotation domain for a rational function $f$ of degree at least $2$  if $f^Q:A\to A$ is a homeomorphism for some $Q\ge 1$
which is conjugate to an irrational rotation: for a conformal homeomorphism $\psi_A:\Delta_A\to A$ where $\Delta_A$ is either a round disk or a round annulus, the conjugate mam $R:=\psi_A^{-1}\circ f^Q\circ\psi_A: \Delta_A\to\Delta_A$ is
an irrational rotation of $\Delta_A$. ($A$ is called a Siegel disk or a Herman ring depending on whether $\Delta_A$ is a disk or an annulus.)
\begin{prop}\label{twoproperties}
Let $\{A\}$ be a collection of different rotation domains of a rational function $f$, i.e., each $A$ is either a Siegel disk or a Herman ring.
Then $\{A\}$ satisfies the conditions (I)-(II) of Theorem~\ref{omeganu}, i.e.,

(1) for every $A$, there is a subset $\tilde{X}$ of $\partial \Delta_A$ of the full Lebesgue measure (length) on which $\psi_A$ is one-to-one,

(2) for any two different rotation domains $A_1,A_2$, the harmonic measures $\omega_{A_1}$ of $A_1$ and $\omega_{A_1}$ of $A_2$ are mutually singular.

(3) let $V_f:=\cup_{A\in\{A\}}A$, then the interior of $\overline V_f$ is equal to $V_f$, therefore, $A(V_f)=A(\overline V_f)$.
\end{prop}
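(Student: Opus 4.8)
The plan is to feed the conjugacy $R:=\psi_{A}^{-1}\compose f^{Q}\compose\psi_{A}$ (an irrational rotation of $\Delta_A$) into ergodic theory on the boundary, and to close each argument with the uniqueness theorem for bounded holomorphic functions. For (1) I would first pass to the boundary circle(s): by Fatou's theorem the radial limits give a measurable map $\phi:\partial\Delta_A\to\partial A$, and the relation $f^{Q}\compose\psi_A=\psi_A\compose R$ survives as $f^{Q}\compose\phi=\phi\compose R$ almost everywhere. Here $R$ restricts to an irrational rotation of each circle of $\partial\Delta_A$ (in the Herman ring case $R$ preserves each of the two circles), hence is ergodic for normalized arc length $m$. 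I would measure the failure of injectivity by $N(w)=\#\phi^{-1}(\phi(w))$. Since $R$ is injective while $f^{Q}$ can only glue fibres together, one has $R(\phi^{-1}(y))\subseteq\phi^{-1}(f^{Q}y)$, whence $N\compose R\ge N$ a.e.; as $R$ preserves $m$, each superlevel set $\{N\ge k\}$ agrees with its $R$-preimage up to a null set, so by ergodicity $N$ equals an a.e. constant $N_0\in\{1,2,\dots,\infty\}$.

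To finish (1) I would track the fibre-ratio set $\Gamma(w)=\{u/w:\phi(u)=\phi(w)\}\subseteq\partial\Delta_A$. Running the same comparison for this set-valued quantity and using that $N$ is already constant forces $\Gamma$ to be $m$-a.e. equal to a fixed set; since the fibres are the classes of an equivalence relation, that fixed set must be a subgroup $\Gamma_0$ of the circle. Thus for each fixed $\gamma\in\Gamma_0$ one has $\phi(\gamma w)=\phi(w)$ for a.e. $w$. But then $\psi_A(w)$ and $\psi_A(\gamma w)$ are bounded holomorphic on $\Delta_A$ with the same radial limits a.e., hence identical, and univalence of $\psi_A$ forces $\gamma=1$; so $\Gamma_0=\{1\}$ and $N_0=1$. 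The \emph{main obstacle} in (1) is precisely this last step --- converting the measure-theoretic coincidence into genuine injectivity --- where the rigidity of conformal maps (uniqueness of boundary values together with univalence) does the real work, the ergodic input only guaranteeing that the coincidence is governed by a single group element.

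For (2) I would note that, by (1), $\phi$ is a measure isomorphism intertwining $R$ (with invariant $m$) and $f^{Q}|_{\partial A}$ (with invariant $\omega_A=\phi_{*}m$), so $\omega_A$ is $f^{Q}$-invariant, ergodic, and non-atomic. Replacing $Q$ by a common multiple of $Q_1,Q_2$ (the restriction of $f^{Q}$ to each $A_i$ is still conjugate to an irrational rotation, hence ergodic), both $\omega_{A_1}$ and $\omega_{A_2}$ are ergodic $f^{Q}$-invariant probability measures. Two ergodic invariant measures of one transformation are either mutually singular or equal, so it suffices to exclude $\omega_{A_1}=\omega_{A_2}$; since the support of harmonic measure of a finitely connected domain is its whole boundary, equality would force $\partial A_1=\partial A_2$. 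Excluding two distinct rotation domains with identical boundary is the delicate point: it is immediate when $\overline{A_1}\cap\overline{A_2}$ is at most a point, and in general I would rule it out via the global dynamics (the same degree/permutation mechanism used for (3) below).

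For (3) the inclusion $V_f\subseteq\mathrm{Int}(\overline{V_f})$ is automatic, and $A(V_f)=A(\overline{V_f})$ follows once $\mathrm{Int}(\overline{V_f})=V_f$ (cf. the remark that (II) holds whenever $\mathrm{Int}(\overline V)=V$). So I must rule out a point $z\in\mathrm{Int}(\overline{V_f})\setminus V_f$; such $z$ lies on some $\partial A\subseteq J(f)$, and a disk $D\ni z$ lies in $\overline{V_f}$. Since the rotation domains are permuted by $f$, the set $V_f$ is completely invariant, so the blowing-up property of the Julia set gives $\bigcup_n f^{n}(D)\supseteq\hat{\C}$ minus at most two points, forcing $\overline{V_f}=\hat{\C}$. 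As there are only finitely many rotation domains, this yields $\overline{V_f}=\bigcup_i\overline{A_i}$ (so there is no other Fatou component) and $J(f)=\bigcup_i\partial A_i$. But $f$ permutes the pieces $\partial A_i$ and, by (1), maps each onto another a.e. homeomorphically, so $f$ would be a.e. injective on $J(f)$, contradicting that a map of degree $\ge2$ is everywhere $d$-to-one on its Julia set; hence no such $z$ exists. Making this final degree count fully rigorous (controlling overlaps of the boundaries) is the subtle part here, and it is the common thread of the proposition: turning the rotation/ergodic structure into exact, everywhere-valid statements requires an extra injection of complex-analytic or global dynamical rigidity.
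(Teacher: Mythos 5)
Your route is genuinely different from the paper's --- the paper gets (1) essentially for free from the Pommerenke--Rodin dichotomy (Lemma~\ref{radlim2cases}) plus Riesz's uniqueness theorem, and handles (2)--(3) by a normality argument --- but your substitute arguments have two real gaps. In (1), the ergodic input only yields that the fibre count $N$ is a.e.\ constant, $N_0\in\{1,2,\dots,\infty\}$. The decisive step, that the fibre-ratio set $\Gamma(w)$ is a.e.\ equal to one fixed subgroup, is justified only when $N_0<\infty$: for infinite (possibly uncountable) fibres the inclusion $\Gamma(Rw)\supseteq\Gamma(w)$ plus ``equal cardinality'' gives no equality, the space of such $\Gamma$'s is no longer one where ``$R$-invariant $\Rightarrow$ a.e.\ constant'' is automatic, and even the measurability of $N$ needs a Lusin--Novikov type selection presupposing countable fibres (Riesz only makes each fibre null, not countable). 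The scenario you must exclude --- a.e.\ every point has a nontrivial partner whose ratio varies with the point --- is exactly the nontrivial content of Pommerenke--Rodin: coincidences of radial limits occur only at a single fixed point $a\in\partial A$ of $f$, after which $\psi_A^{-1}(a)$ is null by Riesz. Your final rigidity step ($\psi_A(\gamma w)\equiv\psi_A(w)$ forces $\gamma=1$) is fine, but it is reached only under an unproved finiteness hypothesis.

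In (3) the closing ``contradiction'' is not one: $f$ being a.e.\ injective on each $\partial A_i$ with respect to harmonic measure does not conflict with $f$ being $d$-to-one on $J(f)$, because the other $d-1$ preimages of a point may all lie in harmonic-measure-null sets or on other boundary pieces (compare $z\mapsto z^d$ on $S^1$: $d$-to-one, yet $S^1$ is the common boundary of two invariant components, each mapped ``a.e.\ injectively'' onto itself). The correct exclusion of $F(f)=\bigcup_i A_i$ is a degree count on the \emph{Fatou components}: $f:A_j\to A_{\sigma(j)}$ is proper of degree $1$ (rotation domains are iterated univalently), and the degrees over each component must sum to $\deg f$, forcing $\deg f=1$; the paper instead applies Montel to a disk around a Julia point whose $f^{jQ}$-orbit stays inside $\overline{V_f}$ (respectively $\overline{A_1\cup A_2}$). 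Since your ``delicate point'' in (2) --- excluding a shared boundary component --- is deferred to this same flawed mechanism, (2) inherits the gap; note also that for a Herman ring $\omega_A$ is \emph{not} ergodic under $f^Q$ (each of the two boundary components is invariant), so your ``two ergodic measures are equal or singular'' dichotomy must be applied boundary component by boundary component, as the paper in effect does.
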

Part (1) is an easy corollary of the following claim which is proved in \cite{PomRud}, Theorem 2 for the Siegel disk and its
proof holds with obvious modifications for the Herman ring:
\begin{lem}\label{radlim2cases}
Let $A$ be a rotation domain of a rational function $f$ such that $f(A)=A$ and $\psi_A: \Delta_A\to A$ as above. There are only two cases:

(i) All radial limits $\psi_A(w)$ are different,

(ii) There is a point $a\in\partial A$  such that $f(a)=a$, moreover, if $\psi_A(w_1)=\psi_A(w_2)$ for some $w_1,w_2\in\partial \Delta_A$
then $\psi_A(w_1)=\psi_A(w_2)=a$.
\end{lem}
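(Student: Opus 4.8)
The plan is to push the linearizing conjugacy $f\circ\psi_A=\psi_A\circ R$ to the boundary and exploit the elementary fact that radial limits of a univalent map land on the boundary of its image. Normalizing $\Delta_A=\D$ in the Siegel case (the Herman case is identical, with $\Delta_A$ a round annulus and $R$ an irrational rotation of it), write $R(w)=e^{2\pi i\alpha}w$ with $\alpha$ irrational. Since $f$ is rational, hence continuous, and $R$ carries the radius through $w$ onto the radius through $Rw$, interchanging the limit with $f$ yields $\psi_A(Rw)=f(\psi_A(w))$ at every $w$ where the radial limit exists. As $R$ preserves arclength, intersecting the full-measure set of radial limits with all its $R$-iterates produces an $R$-invariant full-measure set $X\subset\partial\Delta_A$ on which both the radial limits and the functional equation hold.

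The structural core is a non-crossing statement for boundary identifications. Suppose $\psi_A(w_1)=\psi_A(w_2)=p$ with $w_1\neq w_2$. The hyperbolic geodesic $\gamma=[w_1,w_2]$ lies in $\Delta_A$, so $\psi_A(\gamma)\subset A$, while both ends of $\gamma$ land at $p$; hence $J:=\psi_A(\gamma)\cup\{p\}$ is a Jordan curve meeting $\partial A$ only at $p$ (a radial limit lies on $\partial A$, never in the open set $A$, since $\psi_A$ is a homeomorphism, so an interior value $\psi_A(z)$ with $z\in\Delta_A$ is never a radial limit). The geodesic splits $\Delta_A$ into $D_1,D_2$, whose images are the inside and the outside of $J$ within $A$. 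If a second identified pair $\psi_A(w_3)=\psi_A(w_4)=q$ is \emph{linked} with $\{w_1,w_2\}$, then $q$ is approached both through $D_1$ and through $D_2$, so it lies in the closures of both the inside and the outside of $J$, whence $q\in J\cap\partial A=\{p\}$. Therefore linked identification chords always carry the same image value; equivalently, chords over distinct image points never cross.

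From this I would extract two dynamical consequences. Let $I$ be the shorter arc cut off by $\gamma$, of length $\ell\le\tfrac12$. For every $n$ with $f^n(p)\neq p$ the chord $R^n\gamma$, lying over $f^n(p)$, is unlinked with $\gamma$; since two arcs of equal length $\ell$ that are unlinked are disjoint (a nested pair of equal length would coincide, forcing $n\alpha\in\Z$, impossible for $n\neq0$), the arcs $\{R^nI\}$ over distinct orbit points are pairwise disjoint, hence finite in number, so $p$ is \emph{periodic}, say of period $m$. Moreover any other identification point $q$ must lie on the $f$-orbit of $p$: by density of $\{R^nw_1\}$ one can rotate $\gamma$ so that $R^n\gamma$ links the chord over $q$, forcing $f^n(p)=q$. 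Thus the set of identification points is either empty, giving case (i), or a single periodic cycle; and since each fibre $\psi_A^{-1}(p)$ is $R^m$-invariant with at least two points, it must be infinite, as an irrational rotation admits no finite invariant set.

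The main obstacle is the final step: upgrading ``periodic cycle'' to the single fixed point $a$ of statement (ii), i.e. proving $m=1$ and that the cycle is one point. The combinatorics above is consistent with $m\ge2$, so this is where the genuine content of \cite{PomRud}, Theorem 2 enters. Here one invokes minimality of $R^m$ (which forces the fibre over $p$ to be dense in $\partial\Delta_A$) together with the local behaviour of $f$ at the boundary periodic point and a distortion/normal-family argument to exclude proper cycles and to show that the collapse occurs at a unique point, which is then fixed. The Herman ring case runs verbatim, replacing disk geodesics by crosscuts of the annulus and appealing to the annular form of \cite{PomRud}, Theorem 2.
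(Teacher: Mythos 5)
The paper offers no proof of this lemma at all: it is quoted from Pommerenke--Rodin \cite{PomRud}, Theorem~2 (Siegel case), with the remark that the argument adapts to Herman rings. Your attempt is therefore an independent reconstruction, and most of it is sound: the boundary functional equation $\psi_A(Rw)=f(\psi_A(w))$ at radial-limit points, the fact that the hyperbolic geodesic over an identified pair maps to a Jordan curve meeting $\partial A$ only at the identification point (Lindel\"of's theorem is what upgrades the radial limit to the limit along the geodesic), the non-crossing principle for identification chords over distinct image points, and the disjoint-arcs count forcing the orbit of an identification point to be finite are all correct.

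The genuine gap is your last paragraph: the passage from ``finite cycle of identification points'' to ``a single fixed point $a$'' is not proved but delegated back to \cite{PomRud} via an unspecified ``distortion/normal-family argument''. What you miss is that this step follows from two facts you already state. If $p$ is an identification point with $f^m(p)=p$, then $R^m$ maps the fibre $F_p=\psi_A^{-1}(p)\subset\partial\Delta_A$ into itself, so $\overline{F_p}$ is a nonempty closed forward-invariant set of the irrational rotation $R^m$ and hence $F_p$ is dense, as you observe. Now if $q\neq p$ were another identification point, its fibre $F_q$ would contain two points $a_1\neq a_2$, and density of $F_p$ would give $b_1,b_2\in F_p$ in the two different arcs of $\partial\Delta_A\setminus\{a_1,a_2\}$; the chords $\{a_1,a_2\}$ and $\{b_1,b_2\}$ are then linked although they lie over distinct points, contradicting your own non-crossing principle. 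Hence there is exactly one identification point, so its period is $1$ and (ii) follows with no distortion or normal-family input. Two smaller issues: finiteness of the forward orbit only makes $p$ \emph{pre}periodic, not periodic, but the same crossing argument excludes a strictly preperiodic $p$ because $\psi_A^{-1}(f^n(p))\supset R^nF_p$ has at least two elements for every $n$, so the periodic point at the end of the orbit is also an identification point with a dense fibre; and in the Herman-ring case an identification between the two boundary circles of the annulus needs a separate (easy) crosscut discussion rather than a ``verbatim'' repetition.
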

\begin{proof}[of Proposition \ref{twoproperties}]
(1) follows at once from Lemma~\ref{radlim2cases} and Riesz's uniqueness theorem for bounded analytic functions.
Let's prove (2). It's enough to show that the following is impossible: $\omega_{A_1}(Y)>0$ and $\omega_{A_2}(Y)>0$ for $Y=\overline{A_1}\cap\overline{A_2}$ where $Y$ is a subset of a component $L_1$ of $\partial A_1$ as well as a component $L_2$ of $\partial A_2$. So by a contradiction assume this is the case. First, since $f^Q(L_i)=L_i$ for some $Q>0$ and $i=1,2$, $f^Q(Y)\subset Y$. Secondly,
by the Fatou theorem on radial limits, there is a set $\tilde{Y}\subset \partial\Delta_{A_1}$ of positive length $|\tilde{Y}|>0$ such that for all $w\in\tilde{Y}$, the radial limit $\psi_{A_1}(w)$ exists and in $Y$. Moreover, since $f^Q(Y)\subset Y$, one can assume that $R(\tilde{Y})\subset\tilde{Y}$ where $R=\psi_{A_1}^{-1}\circ f^Q\circ\psi_{A_1}:\Delta_{A_1}\to \Delta_{A_1}$ is an irrational rotation of $\Delta_{A_1}$. Since $|\tilde{Y}|>0$, we get that $\tilde{Y}$ has a full measure. Then $Y$ is a closed subset of $L_1$ of the full harmonic measure $\omega_{A_1}$, hence, $Y=L_1$. Since $\omega_{A_2}(Y)>0$, then $Y=L_2$ as well, i.e.,
$L_1=L_2$. Taking now a small disk $B$ around some $x\in L_1=L_2$, we see that all iterates $f^{jQ}B$, $j\ge 0$, stay in $\overline{A_1\cup A_2}$, which is possible only if $x$ in the Fatou set, a contradiction.
Moreover, that the interior of $\overline{V_f}$ coincides with $V_f$ is also proved by a very similar argument.
\end{proof}
Theorem \ref{omeganu} and Proposition \ref{twoproperties} immediately imply
\begin{coro}\label{rings}
Suppose $\mathcal{H}$ is a non empty collection of bounded rotation domains
of a rational function $f$.
Let $V=\cup\{A: A\in\mathcal{H}\}$, $E\subset\C\setminus V$ a nowhere dense compact set such that
$\partial V\subset E$, and $\nu$ be a measure supported on $E$ such that $\hat\nu=0$ off $E\cup V$. If $E$ is a C-compact and $\overline{V}$ is a A-compact, then $\nu$ is, in fact, supported on $\partial V=\cup_{A\in\mathcal{H}}\partial A$ and, for each  $A$, $\nu|_{\partial A}\ll\omega_A$.
In particular, $\nu$ is non-atomic. Moreover, the function $\hat\nu\circ\psi_A'$ is in the $H^1$-Hardy space, i.e.,
$$\limsup_{\epsilon\to 0}\int_{\{z\in\Delta_A: \dist(z,\partial A)=\epsilon\}}|\hat\nu\circ\psi_A'(z)||dz|<\infty.$$
\end{coro}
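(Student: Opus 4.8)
The plan is to obtain the statement as a direct corollary, exactly as the preceding sentence advertises: all the analytic content is already in Theorem~\ref{omeganu} and Proposition~\ref{twoproperties}, and what remains is to check the hypotheses and translate the conclusion into the language of rotation domains.

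First I would set $\Omega:=A$ for each $A\in\mathcal H$. The domains $A$ are pairwise disjoint Fatou components, and since a rational map carries only finitely many rotation domains, $\mathcal H$ is finite and $V=\cup_{A\in\mathcal H}A$ is a bounded open set whose connected components are exactly the $A$'s. Each rotation domain is a Siegel disk or a Herman ring, hence simply or doubly connected --- in particular finitely connected --- and its boundary is a nondegenerate continuum, so it has no isolated points; thus $V$ meets the standing assumptions of Theorem~\ref{omeganu}. Conditions (I) and (II) are precisely parts (1)--(3) of Proposition~\ref{twoproperties} applied to the subcollection $\mathcal H$, while condition (III) is supplied verbatim by the hypotheses of the corollary: (i) $\overline V$ is an A-compact and (ii) $E$ is a C-compact. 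The remaining set-theoretic requirements $E\subset\C\setminus V$, $\partial V\subset E$, and that $E$ is nowhere dense, are also assumed.

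Next I would invoke the implication (a)$\Rightarrow$(b) of P1 with the data $\{\Omega_i,\kappa_i\}=\{A,\ \hat\nu|_A\}_{A\in\mathcal H}$. Here each $\kappa_A:=\hat\nu|_A$ is holomorphic on $A$, because $\hat\nu$ is holomorphic off $\mathrm{supp}(\nu)\subset E$ and $A\subset\C\setminus E$; and the given measure $\nu$ realizes alternative~(a), since by construction $\hat\nu=\kappa_A$ on each $A$ and $\hat\nu=0$ on $\C\setminus(E\cup V)$ by hypothesis. Theorem~\ref{omeganu} then yields (b): $\|\kappa_A\|<\infty$, $\sum_{A}\|\kappa_A\|<\infty$, and a decomposition $\nu=\sum_{A}\nu_A$ into pairwise mutually singular measures with $\nu_A$ carried by $\partial A$, $\nu_A\ll\omega_A$, $\|\nu_A\|=\|\kappa_A\|$, and $\nu$ without atoms.

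Finally I would read off the three assertions. The decomposition shows that $\nu$ is supported on $\cup_{A\in\mathcal H}\partial A=\partial V$, the last equality holding because a boundary point of one open component cannot lie in another component, hence not in $V$. Since the $\nu_A$ are mutually singular and $\nu$ carries no atoms, the at-most-single-point overlaps $\overline A\cap\overline{A'}$ are $\nu$-null, so $\nu|_{\partial A}=\nu_A\ll\omega_A$, which is the main assertion (and non-atomicity is the ``in particular''). The $H^1$-statement is merely the finiteness of $\|\kappa_A\|$ rewritten: substituting $z=\psi_A(w)$ in (\ref{omegakappa}) turns the integral over $\partial\Omega_{A,\epsilon}$ into $\int_{\{\dist(w,\partial\Delta_A)=\epsilon\}}|\hat\nu(\psi_A(w))\,\psi_A'(w)|\,|dw|$, whose $\limsup$ as $\epsilon\to0$ equals $\|\kappa_A\|<\infty$; that is, $(\hat\nu\circ\psi_A)\,\psi_A'$ lies in $H^1(\Delta_A)$. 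As all the work is done by the two cited results, there is no genuine analytic obstacle; the only points demanding care are the bookkeeping just described --- matching $\cup\partial A$ with $\partial V$, using non-atomicity to pass from the summand $\nu_A$ to the restriction $\nu|_{\partial A}$, and recognising the change-of-variables form of (\ref{omegakappa}) as the $H^1$-integral written in the statement.
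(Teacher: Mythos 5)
Your proposal is correct and is essentially the paper's own derivation made explicit: the paper offers no separate proof of Corollary \ref{rings}, stating only that it follows immediately from Theorem \ref{omeganu} (applied with $\kappa_A=\hat\nu|_A$, exactly as you do) together with Proposition \ref{twoproperties}, which supplies conditions (I) and (II), while (III) is hypothesis. One small inaccuracy in your bookkeeping: to get $\nu|_{\partial A}=\nu_A$ you invoke ``the at-most-single-point overlaps $\overline A\cap\overline{A'}$,'' but that hypothesis belongs to the Jordan-domain remark opening the subsection on rotation domains and is not guaranteed here --- for rotation domains Proposition \ref{twoproperties} only gives $\omega_A\perp\omega_{A'}$, and $\partial A\cap\partial A'$ may be large; the restriction $\nu|_{\partial A}$ could a priori pick up mass of some $\nu_{A'}$ carried on that overlap, so the clause ``$\nu|_{\partial A}\ll\omega_A$'' is really to be read as the statement about the canonical summand $\nu_A$ in the decomposition (\ref{nurepres}), which is all that Theorem \ref{omeganu} (and the paper) actually delivers.
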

Conjecturally, $\overline{V}=\cup\{\overline{A}: A\in\mathcal{H}\}$ is {\it always} a A-compact.
\section{Proof of Theorem~\ref{omeganu}}\label{mainprop}
\subsection{Preparatory statements}
The proof is heavily based on some general results, mainly of $\sim$1960's, see Theorems \ref{H}, \ref{T} and \ref{appr}.
The first one is a consequence of the Hahn-Banach and the Riesz representation theorems, see \cite{garnett}:
\begin{theo}\label{H}(V.P. Havin)
Let $F\subset \C$ be compact and let $g$ analytic on $\C\setminus F$ and $g(\infty)=0$. There is a measure $\nu$ on $F$ such that
$g(z)=\hat\nu(z)$ for all $z\notin F$ if and only if there is $C_g$ such that for all functions $h$ which are analytic in a neighborhood of $F$,
$$|T_g(h)|\le C_g||h||_F$$
where $||h||_F=\sup_{z\in F}|h(z)|$ and
$$T_g(h)=-\frac{1}{2\pi i}\int_{\partial U}g(z)h(z)dz$$
where $U$ is any small enough neighborhood of $F$ such that $\partial U$ consists of a finitely many analytic curves that surround $E$ in positive direction. When this is the case we may take $C_g=||\nu||$, the total variation of $\nu$.
\end{theo}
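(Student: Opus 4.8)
The plan is to treat $T_g$ as a linear functional and reduce everything to Hahn--Banach and the Riesz representation theorem, as the attribution suggests. First I would record that $T_g(h)$ is independent of the admissible neighborhood $U$: if $U_1\subset U_2$ are two such neighborhoods, then $g\cdot h$ is analytic on the closed annular region $\overline{U_2}\setminus U_1$ (it lies in $\C\setminus F$, where $g$ is analytic, and $h$ is analytic near $F$), so Cauchy's theorem makes the two contour integrals agree. For the necessity direction, suppose $g=\hat\nu$. Substituting $\hat\nu(z)=\int_F d\nu(w)/(w-z)$ into the definition of $T_g(h)$ and interchanging the order of integration (Fubini), the inner contour integral $\int_{\partial U} h(z)/(w-z)\,dz$ equals $-2\pi i\,h(w)$ by the Cauchy integral formula, since each $w\in F$ lies inside $U$; hence $T_g(h)=\int_F h\,d\nu$. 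This immediately yields $|T_g(h)|\le ||\nu||\,||h||_F$, so one may take $C_g=||\nu||$.

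For the sufficiency direction, the key observation is that the hypothesis $|T_g(h)|\le C_g||h||_F$ forces $T_g(h)=0$ whenever $h|_F=0$; thus $T_g$ descends to a well-defined linear functional on the subspace $\mathcal{R}=\{h|_F : h\ \textrm{analytic in a neighborhood of}\ F\}\subset C(F)$, bounded of norm at most $C_g$ in the sup-norm. I would then extend it by Hahn--Banach to a functional $L$ on all of $C(F)$ with $||L||\le C_g$, and invoke the Riesz representation theorem to write $L(\phi)=\int_F\phi\,d\nu$ for a finite measure $\nu$ on $F$ with $||\nu||\le C_g$. In particular $T_g(h)=\int_F h\,d\nu$ for every $h$ analytic near $F$.

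It remains to verify that this $\nu$ reproduces $g$, i.e. $g(z_0)=\hat\nu(z_0)$ for every $z_0\notin F$. Fixing such a $z_0$ and choosing $U$ small enough that $z_0\notin\overline{U}$, I would apply the identity $T_g(h)=\int_F h\,d\nu$ to the test function $h(w)=1/(w-z_0)$, which is analytic near $F$; the right-hand side is then exactly $\hat\nu(z_0)$. The left-hand side is $-\frac{1}{2\pi i}\int_{\partial U} g(z)/(z-z_0)\,dz$, and evaluating this over the exterior region $\C\setminus\overline{U}$ (whose positively oriented boundary is $\partial U$ with reversed orientation) by the residue theorem on the sphere gives the residue $g(z_0)$ at $z_0$ together with the residue at $\infty$, the latter vanishing precisely because $g(\infty)=0$ forces $g(z)/(z-z_0)=O(1/z^2)$. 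Hence $g(z_0)=\hat\nu(z_0)$, completing the argument. I expect the main obstacle to be the functional-analytic passage to $\mathcal{R}$ in the sufficiency direction: this is the only place the full strength of the hypothesis is used, since without the bound one could not guarantee that $T_g$ depends on $h$ only through $h|_F$ (two functions analytic near $F$ may disagree off $F$ yet agree on a thin $F$), and it is the normalization $g(\infty)=0$ that is exactly what kills the residue at infinity in the final identification.
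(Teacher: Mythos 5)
Your proof is correct and follows exactly the route the paper indicates (it does not prove Theorem~\ref{H} itself but cites Garnett and describes it as "a consequence of the Hahn--Banach and the Riesz representation theorems"): necessity via Fubini and the Cauchy integral formula with $C_g=||\nu||$, sufficiency by extending $T_g$ from the restrictions of locally analytic functions to a bounded functional on $C(F)$, representing it by a measure, and testing against the Cauchy kernels $w\mapsto 1/(w-z_0)$, with $g(\infty)=0$ killing the contribution at infinity. No gaps.
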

\begin{theo}\label{T}(G.G. Tumarkin, \cite{Tumarkin})
Let $g$ be analytic in $\C\setminus S^1$ and $g(\infty)=0$. Then $g=\hat\eta$ for some measure
$\eta$ (with supp$(\eta)\subset S^1$) if and only if
$$\sup_{0<r<1}\int_{S^1}|g(r\theta)-g(\frac{\theta}{r})||d\theta|<\infty.$$
\end{theo}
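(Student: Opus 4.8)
The plan is to recognize the combination $g(r\theta)-g(\theta/r)$ as (the radial values of) a single harmonic function on the disk, and — in the case $g=\hat\eta$ — to identify it with the Poisson integral of an auxiliary measure; the whole statement then collapses to the classical representation theorem for harmonic functions of bounded $L^1$-means. Writing $z=re^{it}$, so that $1/\bar z=e^{it}/r$, I would set
$$u(z):=g(z)-g(1/\bar z),\qquad z\in\D\setminus\{0\}.$$
Since $g$ is holomorphic on $|w|>1$ with $g(\infty)=0$, its expansion $g(w)=\sum_{n\ge1}a_nw^{-n}$ gives $g(1/\bar z)=\sum_{n\ge1}a_n\bar z^{\,n}$, which is anti-holomorphic in $z$ and vanishes at $z=0$; hence $u$ extends to a harmonic function on all of $\D$, and $u(re^{it})=g(re^{it})-g(e^{it}/r)$. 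Thus the Tumarkin condition is exactly the assertion that $u$ lies in the harmonic Hardy space $h^1(\D)$, i.e. $\sup_{0<r<1}\int_{S^1}|u(re^{it})|\,dt<\infty$.

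For the ``only if'' direction I would compute $u$ for a point mass. If $\eta=\delta_{w_0}$ with $w_0=e^{is_0}$, then $g(z)=1/(w_0-z)$ and a direct simplification (the two boundary factors have equal modulus and the cross term collapses to $2r\cos(t-s_0)$) yields
$$g(z)-g(1/\bar z)=\frac{(1-r^2)e^{-is_0}}{1-2r\cos(t-s_0)+r^2}=\bar{w}_0\,P_r(t-s_0),$$
where $P_r(\phi)=(1-r^2)/(1-2r\cos\phi+r^2)$ is the Poisson kernel. By linearity, for a general measure $\eta$ and the auxiliary measure $d\mu(w):=\bar w\,d\eta(w)$ (so $\|\mu\|=\|\eta\|$),
$$u(re^{it})=\int_{S^1}P_r(t-s)\,d\mu(e^{is})=(P_r*\mu)(t).$$
Fubini together with $\int_{S^1}P_r(t)\,dt=2\pi$ then gives $\int_{S^1}|u(re^{it})|\,dt\le2\pi\|\eta\|$ for every $r$, which is the required uniform bound.

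For the converse I would invoke the classical Herglotz representation theorem: a harmonic function $u$ on $\D$ with $\sup_r\int_{S^1}|u(re^{it})|\,dt<\infty$ is the Poisson integral $P_r*\mu$ of a unique finite complex measure $\mu$ on $S^1$, obtained as a weak-$*$ limit of the measures $u(r\,\cdot)\,dt$. Setting $d\eta(w):=w\,d\mu(w)$ produces a finite measure on $S^1$, and by the point-mass identity applied in reverse the Cauchy transform $h:=\hat\eta$ satisfies $h(z)-h(1/\bar z)=(P_r*\mu)(t)=u(z)=g(z)-g(1/\bar z)$. Hence $\phi:=g-h$ obeys $\phi(z)=\phi(1/\bar z)$ on $\D$. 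The left-hand side is holomorphic in $z$, while $z\mapsto\phi(1/\bar z)$ is anti-holomorphic; a function that is simultaneously both is constant, so $\phi$ is constant on $\D$, and evaluating the exterior part at $\infty$ (where $g(\infty)=h(\infty)=0$) forces that constant to be $0$. Therefore $g=\hat\eta$.

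The main obstacle is the first, computational step: unwinding $g(r\theta)-g(\theta/r)$ for a point mass and seeing the Poisson kernel emerge \emph{exactly}, since this identity is what converts the problem into a citation of the $h^1$ harmonic representation theorem combined with the holomorphic/anti-holomorphic rigidity used to recover $g$ from $\eta$. The only remaining points needing care are the removable singularity of $u$ at the origin and the weak-$*$ extraction of $\mu$, both of which are routine.
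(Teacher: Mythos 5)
Your proof is correct, and for the direction the paper actually proves it is essentially the paper's argument: the key step in both is the identity
$$\frac{1}{w-r\zeta}-\frac{1}{w-\zeta/r}=e^{-it}P_r(\theta-t),\qquad w=e^{it},\ \zeta=e^{i\theta},$$
followed by Fubini and $\int P_r=2\pi$ to get the uniform bound $2\pi\|\eta\|$. The difference is one of scope: the paper explicitly proves only the ``only if'' direction (``the direction we need'') and cites Tumarkin for the full equivalence, whereas you also supply the converse by packaging $u(z)=g(z)-g(1/\bar z)$ as a harmonic function in $h^1(\D)$, invoking the Herglotz/Poisson representation $u=P[\mu]$, setting $d\eta(w)=w\,d\mu(w)$, and then using the holomorphic/anti-holomorphic rigidity of $\phi=g-\hat\eta$ together with $\phi(\infty)=0$ to conclude $g=\hat\eta$. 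That converse argument is sound (the removable singularity of $u$ at the origin and the weak-$^*$ extraction of $\mu$ are handled correctly), so your write-up is in fact more complete than the paper's, at the cost of importing the $h^1$ representation theorem as a black box.
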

\begin{com}
Notice a particular case when $g=0$ off $\overline\D$.
\end{com}
For completeness, we prove here this statement for the direction we need. So let $g=\hat\eta$, for a measure $\eta$ on $S^1$. Given $\zeta=e^{i\theta}, w=e^{it}\in S^1$ and $0<r<1$,
$$\frac{1}{w-r\zeta}-\frac{1}{w-\frac{\zeta}{r}}=-\frac{\zeta(r^2-1)}{w\zeta(1-\frac{r\zeta}{w})(\frac{rw}{\zeta}-1)}=
e^{-it}P_r(\theta-t)$$
where $P_r(\theta-t)=(1-r^2)/(1+r^2-2r\cos(\theta-t))$ in the Poisson kernel. Therefore,
$$\hat\eta(r\zeta)-\hat\eta(\frac{\zeta}{r})=\int_{0}^{2\pi}P_r(\theta-t)e^{-it}d\eta(e^it)$$
and
$$\int_{S^1}|g(r\zeta)-g(\frac{\zeta}{r})||d\zeta|\le\int_{0}^{2\pi}d\theta\int_{0}^{2\pi}P_r(\theta-t)d|\eta|(e^{it})$$
$$\le \int_{0}^{2\pi}d|\eta|(e^{it})\int_{0}^{2\pi}P_r(\theta-t)d\theta=2\pi||\eta||$$
where $||\eta||$ is the total variation of the measure $\eta$.

Next two statements must be well known, too, though we are not aware of precise references and will give for completeness proofs here, cf. \cite{Bish1}-\cite{Bish2}.
For the first one, recall the following definition. Let $D$ be a bounded or unbounded circular domain, i.e.,
$\partial D=S_1\cup S_2\cup...\cup S_p$ where $S_1,\cdots,S_p$ are pairwise disjoint circles. The Hardy space $H^1(D)$ is a set of all holomorphic in $D$ functions $F$ with $F(\infty)=0$ if $\infty\in D$ such that
$$||F||_{H^1(D)}:=\limsup_{S\in\mathcal{S}}\int_S|F(w)||dw|<\infty$$
where $\mathcal{S}$ is a collection of all circles $S\subset D$ in a small neighborhood of $\partial D$ that are concentric to one of $S_j$, $j=1,\cdots,p$. It is well known e.g. \cite{Fisher} that any $F\in H^1(D)$ has a non-tangential limit $F(w)$ at almost every $w\in\partial D$ w.r.t. the Lebedgue (arc) measure on $\partial D$.
We need also the following representation for $F\in H^1(D)$ assuming $D$ is bounded and $S_1$ is the outer boundary of $D$:
\begin{equation}\label{repr}
F=F_1+F_2+\cdots+F_p
\end{equation}
where $F_j\in D_j$, $D_1$ is a bounded domain with the boundary $S_1$ and $D_j$, $j=2,\cdots,p$, is an unbounded domain with $\partial D_j=S_j$. This representation follows essentially from the Cauchy formula, see \cite{Fisher}.
Note that each $F_j$ is holomorphic in a domain that contains all other $S_k$, $k\neq j$.
\begin{lem}\label{omegakappa1}
Let $\Omega$ be a finitely connected bounded domain with no isolated points of the boundary and $\kappa:\Omega\to\C$ is holomorphic. Assume that
\begin{equation}\label{kappa1}
||\kappa||:=\limsup_{\epsilon\to 0}\int_{\partial\Omega_\epsilon}|\kappa(w)||dw|<\infty
\end{equation}
where $\Omega_{\epsilon}=\psi_{\Omega}(\{w: \dist(w,\partial\Delta_{\Omega})>\epsilon\})$.
Then there is a measure $\nu^\kappa$ supported on $\partial\Omega$ such that $\hat\nu^\kappa(z)=\kappa(z)$ for $z\in\Omega$ and $\hat\nu^\kappa(z)=0$ for $z\ni\overline{\Omega}$. Furthermore,
$\nu^\kappa\ll\omega_\Omega$
with $||\nu^\kappa||=||\kappa||$.
\end{lem}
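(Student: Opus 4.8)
The plan is to transport everything to the circular model domain $\Delta_\Omega$ by means of the uniformization $\psi_\Omega$. Set
$$F:=(\kappa\circ\psi_\Omega)\cdot\psi_\Omega'\qquad\text{on }\Delta_\Omega.$$
Performing the change of variables $z=\psi_\Omega(\zeta)$ on each approximating curve $\partial\Omega_\epsilon$ (so that $|dz|=|\psi_\Omega'(\zeta)|\,|d\zeta|$ and $\partial\Omega_\epsilon$ corresponds to the level curve $\{\dist(\zeta,\partial\Delta_\Omega)=\epsilon\}$) turns the integral defining $\|\kappa\|$ into $\int|F(\zeta)|\,|d\zeta|$ over that level curve. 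Hence the hypothesis $\|\kappa\|<\infty$ is exactly the assertion that $F\in H^1(\Delta_\Omega)$, and $\|\kappa\|=\|F\|_{H^1(\Delta_\Omega)}$. At this point I would invoke the $H^1$-theory on circular domains recalled above: $F$ admits nontangential boundary values $F^*\in L^1(\partial\Delta_\Omega,|d\zeta|)$, the decomposition (\ref{repr}) holds, and the circular means over $\{\dist(\cdot,\partial\Delta_\Omega)=\epsilon\}$ converge in $L^1$ to $F^*$.

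Next I would define the candidate measure. Let $\psi_\Omega^{*}:\partial\Delta_\Omega\to\partial\Omega$ be the a.e.-defined radial boundary map, and set $\nu^\kappa:=(\psi_\Omega^{*})_*\sigma$, the pushforward of the boundary measure $d\sigma=\frac{1}{2\pi i}F^*(\zeta)\,d\zeta$. Since $\sigma$ is absolutely continuous with respect to arc length on $\partial\Delta_\Omega$, and since harmonic measure on the circular domain $\Delta_\Omega$ is mutually absolutely continuous with arc length (the boundary circles being analytic) while $\omega_\Omega=(\psi_\Omega^{*})_*\omega_{\Delta_\Omega}$ by conformal invariance of harmonic measure, pushing forward immediately yields $\nu^\kappa\ll\omega_\Omega$.

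The core of the argument is the Cauchy transform. For $z\in\Omega$ write $z=\psi_\Omega(\zeta_0)$ with $\zeta_0\in\Delta_\Omega$; unwinding the pushforward gives $\hat\nu^\kappa(z)=\frac{1}{2\pi i}\int_{\partial\Delta_\Omega}\frac{F^*(\zeta)}{\psi_\Omega(\zeta)-z}\,d\zeta$. The integrand extends to a function meromorphic in $\Delta_\Omega$ whose only singularity is a simple pole at $\zeta_0$, and since $\psi_\Omega(\zeta)-z=\psi_\Omega'(\zeta_0)(\zeta-\zeta_0)+\cdots$, its residue there is exactly $\kappa(\psi_\Omega(\zeta_0))=\kappa(z)$. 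Integrating over the level curves $\{\dist(\cdot,\partial\Delta_\Omega)=\epsilon\}$, on which $\frac{1}{\psi_\Omega-z}$ stays bounded (because $z$ is interior) and $F$ converges in mean to $F^*$, and letting $\epsilon\to0$, the residue theorem gives $\hat\nu^\kappa(z)=\kappa(z)$. For $z\notin\overline\Omega$ the same integrand is holomorphic on $\Delta_\Omega$ and lies in $H^1$ (as $\psi_\Omega(\zeta)-z$ is bounded away from $0$ up to the boundary), so Cauchy's theorem along the same curves gives $\hat\nu^\kappa(z)=0$.

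Finally I would identify the total variation. Because $\sigma$ is absolutely continuous with respect to arc length, a direct computation of $|d\sigma|$ together with the change of variables of the first step matches $\|\sigma\|$ with $\|\kappa\|$, and the inequality $\|\nu^\kappa\|\le\|\sigma\|$ is automatic for a pushforward. I expect the genuinely delicate point — the main obstacle — to be the reverse inequality $\|\nu^\kappa\|\ge\|\kappa\|$, i.e. that no mass is lost when $\sigma$ is pushed forward by $\psi_\Omega^{*}$. This is precisely where essential injectivity of the boundary map $\psi_\Omega^{*}$ enters (the property built into the D-collection hypothesis), since for a map that is many-to-one on a set of positive measure the contributions of distinct preimages can partially cancel, and then only the inequality survives. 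Granting injectivity almost everywhere, one realizes the supremum defining $\|\nu^\kappa\|$ by choosing $f\in C(\partial\Omega)$, $\|f\|_\infty\le1$, whose pullback $f\circ\psi_\Omega^{*}$ approximates the unimodular factor $\overline{\sgn\bigl(F^*\,d\zeta\bigr)}$, which gives the matching lower bound and hence $\|\nu^\kappa\|=\|\kappa\|$. Alternatively, the lower bound can be read off from Havin's Theorem \ref{H}, whose constant $C_g$ may be taken equal to the total variation.
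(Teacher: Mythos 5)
Your construction is sound but genuinely different from the paper's. The paper builds $\nu^\kappa$ as a weak$^*$ limit of the measures $\frac{1}{2\pi i}\kappa(z)\,dz$ on the level curves $\partial\Omega_{\epsilon_n}$ (so the Cauchy-transform identities are immediate from Cauchy's formula plus weak$^*$ convergence), and then does the real work of identifying that limit with the pushforward of $\frac{1}{2\pi i}\tilde\kappa(w)\,dw$ under the boundary map, via the decomposition (\ref{repr}) and the $L^1$ convergence of $H^1$ functions to their boundary values; absolute continuity and the norm identity are then read off. You instead \emph{define} $\nu^\kappa$ as that pushforward from the start, which makes $\nu^\kappa\ll\omega_\Omega$ essentially free (pushforward preserves absolute continuity, and $\omega_\Omega=(\psi^*_\Omega)_*\omega_{\Delta_\Omega}$ with $\omega_{\Delta_\Omega}$ comparable to arc length), but shifts the $H^1$ machinery into the verification of $\hat\nu^\kappa=\kappa$ on $\Omega$ and $\hat\nu^\kappa=0$ off $\overline\Omega$: your residue/Cauchy-theorem argument on the level curves needs exactly the same ingredients (the splitting (\ref{repr}), $L^1$ convergence of the circular means of $F$ to $F^*$, a.e.\ radial convergence of $\psi_\Omega$, dominated convergence for the bounded kernel $1/(\psi_\Omega(\zeta)-z)$) that the paper uses to prove its identity (\ref{nunukappa}). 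So neither route is shorter; yours is arguably cleaner conceptually because the measure is explicit rather than extracted by compactness.

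Your remark about the total variation is the most valuable part of the proposal, and you should not soften it: the equality $||\nu^\kappa||=||\kappa||$ genuinely requires that $\psi_\Omega$ be essentially injective on $\partial\Delta_\Omega$, which is \emph{not} among the hypotheses of the lemma as stated (it is part of condition (I) of the theorem, and is trivially true in the other place the lemma is invoked, namely inside Lemma \ref{Tcoro} where $\Omega$ is itself circular and $\psi_\Omega=\mathrm{id}$). Without it the pushforward can lose mass by cancellation: for $\Omega=\D\setminus[0,1)$ and $\kappa\equiv1$ the two traversals of the slit cancel and $\nu^\kappa=\frac{1}{2\pi i}\,dz|_{S^1}$, so only $||\nu^\kappa||\le||\kappa||$ survives (up to the $2\pi$ normalization, which incidentally is also off in the lemma even for $\Omega=\D$). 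The paper's own proof asserts that (\ref{nurepres})'s analogue (\ref{nunukappa}) ``implies'' the norm equality without comment; your Luzin-type argument approximating the unimodular factor is the correct way to get the lower bound, and it is exactly where injectivity is consumed. One caveat: your proposed ``alternative'' via Havin's Theorem \ref{H} does not bypass this — the test functions there are analytic in a full neighborhood of $\partial\Omega$, hence take equal values on identified boundary points, so that route hits the same obstruction.
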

\begin{proof} Choose $\epsilon_n\to 0$ and given $n$ define a measure $\nu_n$ on $\partial\Omega_{\epsilon_n}$
by $d\nu_n(z)=\frac{1}{2\pi i}\kappa(z)dz$. By (\ref{kappa1}), $\sup_n||\nu_n||<\infty$.
Let $\nu^\kappa$ be a weak$^*$ limit of the sequence $\{\nu_n\}$. By the Cauchy formula,
$$\hat\nu^\kappa(z)=\lim_{n\to\infty}\frac{1}{2\pi i}\int_{\partial\Omega_{\epsilon_n}}\frac{\kappa(\zeta)d\zeta}{\zeta-z}$$
is equal to $\kappa(z)$ for $z\in\Omega$ and $0$ off $\overline\Omega$.
Denote $\psi=\psi_\Omega$, $\Delta=\Delta_{\Omega}$, $\Gamma_n=\{w: \dist(w,\partial\Delta)=\epsilon_n\})$
and $\tilde\kappa=(\kappa\circ\psi) \psi'$.
Note that (\ref{kappa1}) is equivalent to : $\tilde\kappa\in H^1(\Delta)$.
Let $\psi(w)$ and $\tilde\kappa(w)$, $w\in\partial\Delta$, denote also corresponding limits (existing almost everywhere) of $\psi(u)$ and $\tilde\kappa(u)$ as $u\to w$ non-tangentially.
We have to prove that $\nu^\kappa\ll\omega_\Omega$
For this,
it is enough to show that for any continuous compactly supported function $h$ on $\C$,
\begin{equation}\label{nunukappa}
\int h d\nu^\kappa=\frac{1}{2\pi i}\int_{\partial\Delta}h(\psi(w))\tilde\kappa(w)dw.
\end{equation}
This would imply that $\nu^\kappa\ll\omega_\Omega$
and that $||\nu_\kappa||=||\kappa||$.
 One can check (\ref{nunukappa}) separately for each component of $\partial\Delta$. Let us do this for the outer component $S_1$ of $\partial\Delta=S_1\cup\cdots\cup S_p$ (for other components, the proof is the same with straightforward modifications).
One can assume $S_1=S^1$, the unit circle. Since $\int h d\nu^\kappa=\lim_{n\to\infty}\int h d\nu_n$,
we have to check that, for $r_n=1-\epsilon_n$,
$$\lim_{n\to\infty}\frac{1}{2\pi i}\int_{|w|=r_n}h(\psi(w))\tilde\kappa(w)dw=\frac{1}{2\pi i}\int_{|w|=1}h(\psi(w))\tilde\kappa(w)dw.$$
As $\tilde\kappa\in H^1(\Delta)$, let $\tilde\kappa=\sum_{j=1}^p\tilde\kappa_j$ the corresponding representation
as in (\ref{repr}). Then
$$\int_{|w|=r_n}h(\psi(w))\tilde\kappa(w)dw=\sum_{j=1}^p\int_{|w|=r_n}h(\psi(w))\tilde\kappa_j(w)dw$$
Let $j>1$. Since $\tilde\kappa_j$, $j\neq 1$, is a holomorphic function in a domain that contains $S^1$,
$h(\psi(w))\tilde\kappa_j(w)$ is bounded in $\{r<|w|<1\}$ for some $r<1$, hence, one can apply the Lebesgue Dominated Convergence Theorem:
$$\lim_{n\to\infty}\int_{|w|=r_n}h(\psi(w))\tilde\kappa_j(w)dw=\int_{|w|=1}h(\psi(w))\tilde\kappa_j(w)dw.$$
It remains to check that
$$\lim_{n\to\infty}\int_{|w|=r_n}h(\psi(w))\tilde\kappa_1(w)dw=\int_{|w|=1}h(\psi(w))\tilde\kappa_1(w)dw.$$
Here we have to use that $\tilde\kappa_1\in H^1(\D)$, the Hardy space in the unit disk.
We have:
$$|\int_{|w|=r_n}h(\psi(w))\tilde\kappa_1(w)dw-\int_{|w|=1}h(\psi(w))\tilde\kappa_1(w)|\le$$
$$\int_{0}^{2\pi}|r_n h(\psi(r_n e^{it}))-h(\psi(e^{it})||\tilde\kappa_1(r_n e^{it})|dt
+\int_{0}^{2\pi}|h(\psi(e^{it}))||\tilde\kappa_1(r_n e^{it})-\tilde\kappa_1(e^{it})|dt\to 0.$$
Here are some details. Let $I_n$ be the first integral and $J_n$ be the second one.
Then
$$J_n\le \sup_{t\in [0,2\pi]}|h(\psi(e^{it}))|\int_{0}^{2\pi}|\tilde\kappa_1(r_n e^{it})-\tilde\kappa_1(e^{it})|dt\to 0$$
because $\tilde\kappa_1\in H^1(\D)$ (\cite{Koo}, ch.II,B,2$^o$).
Let's prove that $\lim_n I_n=0$. Let $I:=\limsup_n I_n$.
Since $r\psi(r e^{it})\to \psi(e^{it})$ as $r\to 1$ a.e. in $t$ and $h$ is continuous,
for every $\sigma>0$ there is $E_\sigma\subset [0,2\pi]$ such that $l(E_\sigma)>2\pi-\sigma$
(where $l$ is the lebesgue measure on $(0,2\pi)$) and
$r_n h(\psi(r_n e^{it}))-h(\psi(e^{it}))\to 0$ uniformly in $t\in E_\sigma$. Since also $\tilde\kappa_1\in H^1(\D)$,
$$\lim_n\int_{E_\sigma}|r_n h(\psi(r_n e^{it}))-h(\psi(e^{it})||\tilde\kappa_1(r_n e^{it})|dt=0.$$
Hence,
$$I\le 2\sup|h|\limsup_n\int_{F_\sigma}|\tilde\kappa_1(r_n e^{it})|dt$$
where $F_\sigma=[0,2\pi]\setminus E_\sigma$ so that $l(F_\sigma)\le \sigma$.
On the other hand,
$$|\int_{F_\sigma}|\tilde\kappa_1(r_n e^{it})|dt-\int_{F_\sigma}|\tilde\kappa_1(e^{it})|dt|\le
\int_{0}^{2\pi}\chi_{F_\sigma}|\tilde\kappa_1(r_n e^{it})-\tilde\kappa_1(e^{it})|dt\le$$
$$\int_{0}^{2\pi}|\tilde\kappa_1(r_n e^{it})-\tilde\kappa_1(e^{it})|dt\to 0$$
as $r\to 1$. Thus
$$I\le 2\sup|h|\int_{F_\sigma}|\tilde\kappa_1(e^{it})|dt.$$
By the absolute continuity of the Lebesgue integral, for every $\epsilon>0$ there is $\sigma>0$ such that
for every $F_\sigma\subset [0,2\pi]$ with $l(F_\sigma)<\sigma$,
$$I\le 2\sup|h|\int_{F_\sigma}|\tilde\kappa_1(e^{it})|dt<2\sup|h|\epsilon.$$
As $\epsilon>0$ is arbitrary, $I=0$.
\end{proof}
In the following, for a bounded function $h: X\to \C$, let
$$||h||_X=\sup_{x\in X}|h(x)|.$$
\begin{lem}\label{Tcoro}
Let $g\not\equiv 0$ be analytic in a bounded circular domain $\Delta$. Assume that there is $C_g>0$ such that for every function $h$ which is holomorphic in a neighborhood of $\partial \Delta$ and all $\epsilon>0$
small enough,
\begin{equation}\label{Tcorob}
|\int_{\Gamma_\epsilon}g(w)h(w)dw|\le C_g||h||_{\partial\Delta}
\end{equation}
where $\Gamma_\epsilon=\{w\in \Delta: \dist(w,\partial \Delta)=\epsilon\}$
Then: (a) there is a measure $\eta$ which is absolutely continuous w.r.t. the Lebesgue (arc) measure on $\partial\Delta$
such that $\hat\eta$ is $g$ in $\Delta$ and $0$ off $\overline\Delta$, and (b)
\begin{equation}\label{bounddelta}
||g||:=limsup_{\epsilon\to 0}\int_{\Gamma_\epsilon}|g(w)||dw|<\infty.
\end{equation}
Moreover, $||g||=||\eta||$, the total variation of $\eta$, and there are a sequence $\{h_j\}$ of locally analytic
on $\partial\Delta$ functions and a sequence $\epsilon_j\to 0$ such that $||h_j||_{\partial\Delta}\to 1$ as $j\to\infty$ and
\begin{equation}\label{sup}
\lim_{j\to\infty}\int_{\Gamma_{\epsilon_j}}g(w)h_j(w)dw=||\eta||=||g||.
\end{equation}
Conversely, (\ref{bounddelta}) implies (obviously) (\ref{Tcorob}).
\end{lem}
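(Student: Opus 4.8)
The plan is to read the hypothesis (\ref{Tcorob}) as the boundedness condition in Havin's Theorem~\ref{H}, extract a measure $\eta$ on $\partial\Delta$, upgrade it to an absolutely continuous one by an F.~and M.~Riesz argument, and then read off the Hardy-space bound (\ref{bounddelta}) and the norm identity from the boundary theory of $H^1$.

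First I would set $G=g$ on $\Delta$ and $G=0$ on $\C\setminus\overline\Delta$, so that $G$ is analytic on $\C\setminus\partial\Delta$ with $G(\infty)=0$. Taking for the neighborhood $U$ in Theorem~\ref{H} a thin two-sided collar of $\partial\Delta=S_1\cup\cdots\cup S_p$, its outer boundary lies in $\{G=0\}$ and contributes nothing, so only the inner contour survives; after fixing orientations one gets $T_G(h)=\frac{1}{2\pi i}\int_{\Gamma_\epsilon}g(w)h(w)\,dw$ for all small $\epsilon$, the value being independent of $\epsilon$ because $gh$ is analytic in the part of the collar inside $\Delta$. Thus (\ref{Tcorob}) is exactly the estimate $|T_G(h)|\le\frac{C_g}{2\pi}||h||_{\partial\Delta}$ demanded by Theorem~\ref{H}, which yields a measure $\eta$ on $\partial\Delta$ with $\hat\eta=G$ off $\partial\Delta$, i.e. $\hat\eta=g$ in $\Delta$ and $\hat\eta=0$ off $\overline\Delta$, satisfying $\int h\,d\eta=T_G(h)$ for every $h$ analytic near $\partial\Delta$ and with minimal constant $||\eta||$. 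This already gives the existence in (a) apart from absolute continuity.

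Next I would prove $\eta\ll$ arc length, componentwise. In the model case $\Delta=\D$, $\partial\Delta=S^1$: since $\hat\eta=0$ for $|z|>1$, all moments $\int w^n\,d\eta$, $n\ge0$, vanish, so the classical F.~and M.~Riesz theorem gives $\eta\ll dt$ with density the boundary value of an $H^1(\D)$ function; hence $g=\hat\eta\in H^1(\D)$, which is the finiteness (\ref{bounddelta}), and the convergence of the $L^1$-means $\int_{\Gamma_\epsilon}|g|\,|dw|$ to the boundary integral identifies $||g||$ with $||\eta||$. (Alternatively, once $g\in H^1(\D)$ is known, this is precisely the $g=0$-off-$\overline\D$ case of Tumarkin's Theorem~\ref{T}.) For a general circular $\Delta$ I would localize: writing $\eta=\sum_j\eta|_{S_j}$ and letting $W_j$ be the complementary component of $\C\setminus\overline\Delta$ on the far side of $S_j$, on which $\hat\eta=0$, the Cauchy transforms of $\eta|_{S_k}$, $k\neq j$, are analytic in a neighborhood of $\overline{W_j}$, so $\widehat{\eta|_{S_j}}$ continues analytically from $W_j$ across $S_j$; subtracting this analytic continuation leaves a function carrying the same jump $\eta|_{S_j}$ across $S_j$ but vanishing on $W_j$, and F.~and M.~Riesz gives $\eta|_{S_j}\ll$ arc length. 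Summing over $j$ yields the absolute continuity; the finiteness and value of $||g||$ then follow from the single-circle statement applied to the Hardy pieces of the splitting (\ref{repr}) of $g\in H^1(\Delta)$.

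Finally, the extremal sequence (\ref{sup}) I would extract from the duality already in hand: since analytic functions are dense in $C(\partial\Delta)$, one has $||\eta||=\sup\{|\int h\,d\eta|:h\text{ analytic near }\partial\Delta,\ ||h||_{\partial\Delta}\le1\}$, so choosing $h_j$ with $||h_j||_{\partial\Delta}\to1$ (possible since $g\not\equiv0$) that nearly attain the supremum with normalized phase, and recalling $\int_{\Gamma_{\epsilon_j}}g h_j\,dw=2\pi i\int h_j\,d\eta$, gives the claimed limit. The converse ``(\ref{bounddelta})$\Rightarrow$(\ref{Tcorob})'' is immediate from $|\int_{\Gamma_\epsilon}gh\,dw|\le||h||_{\Gamma_\epsilon}\int_{\Gamma_\epsilon}|g|\,|dw|$ together with $||h||_{\Gamma_\epsilon}\to||h||_{\partial\Delta}$. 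The step I expect to be delicate is the passage from a single circle to a general circular domain: the measures on the different $S_j$ are coupled through $g$, so the F.~and M.~Riesz argument does not split off one circle for free, and making the analytic continuation of $\widehat{\eta|_{S_j}}$ across $S_j$ rigorous (equivalently, securing the Hardy splitting (\ref{repr}) before absolute continuity is known), while keeping the two-sided boundary bookkeeping and the $2\pi$ normalizations consistent, is where the real work lies.
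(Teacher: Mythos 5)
Your proposal is correct in outline and opens exactly as the paper does (the hypothesis is read as Havin's condition for the function equal to $g$ in $\Delta$ and $0$ off $\overline\Delta$, yielding $\eta$), but the middle of the argument runs in the opposite order from the paper's. The paper first proves the $H^1$ bound (\ref{bounddelta}) by applying Tumarkin's Theorem~\ref{T} to $\eta|_{S_1}$, treating $\sum_{k\ge 2}(\widehat{\eta|_{S_k}}(r\zeta)-\widehat{\eta|_{S_k}}(\zeta/r))$ as an error that vanishes uniformly near $S_1$ because each $\widehat{\eta|_{S_k}}$ is analytic off $S_k$; only then does it get absolute continuity, not by F.~and M.~Riesz at all, but by invoking Lemma~\ref{omegakappa1} (weak-$^*$ limits of $\frac{1}{2\pi i}g\,dz$ on $\Gamma_{\epsilon_n}$) to manufacture an absolutely continuous measure with the same Cauchy transform and identifying it with $\eta$ via the C-compactness of $\partial\Delta$. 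You instead prove absolute continuity first, componentwise by F.~and M.~Riesz, and then deduce the $H^1$ bound from the boundary density; note that your decoupling device (the Cauchy transforms of the other pieces continue analytically across $S_j$) is exactly the same observation the paper uses in its Tumarkin step, so the two routes are close cousins. What the paper's order buys is that the delicate step you flag --- turning ``$\widehat{\eta|_{S_j}}$ continues analytically across $S_j$ from the far side'' into an application of F.~and M.~Riesz --- never has to be made precise (to do it you would represent the continuation as the Cauchy transform of a smooth-density measure on $S_j$ via the Cauchy integral, subtract, and apply the inverted F.~and M.~Riesz theorem on the inner circles); what your order buys is that the identity $\|g\|=\|\eta\|$ and the finiteness come out together from the $L^1$ boundary theory rather than from a separate uniqueness argument. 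For the extremal sequence your duality argument ($\|\eta\|=\sup|\int h\,d\eta|$ over locally analytic $h$ with $\|h\|_{\partial\Delta}\le 1$, legitimate because $\partial\Delta$ is a C-compact so such $h$ are dense in $C(\partial\Delta)$, plus contour independence of $\int_{\Gamma_\epsilon}gh\,dw$) is a clean substitute for the paper's explicit construction, which approximates the unimodular function $|\tilde g|/\tilde g$ by Luzin's theorem and then by locally holomorphic functions. The $2\pi$ normalization you worry about is indeed not consistently tracked in the paper either (the same factor appears in Lemma~\ref{omegakappa1}), so it is not a defect specific to your route.
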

\begin{proof} (\ref{Tcorob}) means that conditions of the Theorem~\ref{H} of Havin are satisfied for the compact $F= \partial\Delta$ and the function which is $g$ in $\Delta$ and $0$ outside of $\overline{\Delta}$.
Hence, there exists a measure $\eta$ supported on $\partial{\Delta}$ such that $\hat\eta=g$ in $\Delta$ and $\hat\eta=0$ in $\C\setminus\overline{\Delta}$.
Let us prove (b) first. We can assume that
$\partial\Delta$ is the union of $S_1=S^1$ and a finitely many disjoint circles $S_k$, $k=2,\cdots,p$
inside the unit circle $S^1$. Let $\eta_k=\eta|_{S_k}$, $1\le k\le p$. If $w\in\Delta$ is close to the component $S_1=S^1$ of $\partial\Delta$, so that $w=r\zeta$, $\zeta\in S^1$, we have:
$$g(r\zeta)=(\hat\eta_1(r\zeta)-\hat\eta_1(\zeta/r))+\delta(r,\zeta)$$
where $\delta(r,\zeta)=\sum_{k=2}^p(\hat\eta_k(r\zeta)-\hat\eta_k(\zeta/r))$ tends to $0$ uniformly
in $\zeta$ as $r\to 1$ because $\hat\eta_k$ is analytic off $S_k$. Hence, applying Theorem~\ref{T} to the measure $\eta_1$ we get (\ref{bounddelta})
for a component $\Gamma_\epsilon$ which is near $S_1$. The proof for other components of $\Gamma_\epsilon$ is very similar. This proves (\ref{bounddelta}). In turn, (\ref{bounddelta}) means that the condition of Lemma \ref{omegakappa1} is satisfied where $\Omega=\Delta$ (so that $\Delta_\Omega=\Delta$ and $\psi_\Omega=id$) and $\kappa=g$.
By Lemma \ref{omegakappa1}, there exists a measure $\nu^g$ supported on $\partial\Delta$ and absolutely continuous w.r.t. the arc measure on $\partial\Delta$ such that $\hat\nu^g=g$ in $\Delta$ and $\hat\nu^g=0$ outside of $\overline\Delta$.
But then the Cauchy transform of a measure $\eta-\nu^g$ vanishes outside of $\partial\Delta$. Since $\partial\Delta$
is a C-compact, we conclude that $\nu^g=\eta$. This proves part (a) along with $||\eta||=||g||_{H^1}$.
It remains to find a sequence $\{h_j\}$ as in (\ref{sup}).
Note that since $\Delta$ consists of a finitely many components (which are circles), it is enough to find $\{h_j\}$
for each component separately. So let $S_b(R)=\{w=b+Re^{it}, 0\le t\le 2\pi\}$ be such a component.
As $g\in H^1(\Delta)$ and $g\not\equiv 0$, the non-tangential limit $\tilde g$ of $g$ exists and non-zero for almost every $w\in S_b(R)$. Note that for $w\in S_b(R)$, $|dw|=\frac{\alpha}{w-b}dw$ where $\alpha=\frac{R}{i}$.
Since $\tilde g\neq 0$ almost everywhere, the function $H(w)=\frac{\alpha}{w-b}\frac{|\tilde g(w)|}{\tilde g(w)}\in L^{\infty}(S_b(R))$.
By Luzin's theorem, given $\delta>0$, there is a continuous on $S_b(R)$ function $H^\delta$ such that $\sup_w|H^\delta(w)|\le\sup_w|H(w)|=1$ and
$l(\{w: H(w)\neq H^\delta(w)\})<\delta$ where $l$ is the Lebesgue (arc) measure on $S_b(R)$. In turn, let $h^\delta$ be a locally holomorphic on $S_b(R)$ function
such that $\sup_{w\in S_b(R)}|H^\delta(w)-h^\delta(w)|<\delta$. Then the sequence of functions $h_j:=h^{1/j}$, defined for all $j$ big enough, and a sequence $\epsilon_j$ tending to zero fast enough, work.

Here are details. Assuming for simplicity $S_b(R)=S^1$, for each $j\ge 1$ choose $\epsilon_j>0$ such that $|h^{1/j}(r_j e^{it})-h^{1/j}(e^{it})|<1/j$ for all
$t\in [0,2\pi]$ and for $r_j=1-\epsilon_j$. One write:
$$|\int_{|w|=r_j}g(w)h^{1/j}(w)dw-\int_{|w|=1}|\tilde g(w)||dw||\le A_j+B_j+C_j$$
where
$$A_j:=|\int_{|w|=r_j}g(w)h^{1/j}(w)dw-\int_{|w|=1}\tilde g(w)h^{1/j}(w)dw|\to 0$$
because
$$A_j\le \int_{0}^{2\pi}|r_j g(r_j e^{it})-\tilde g(e^{it})||h^{1/j}(r_j e^{it})|dt+
\int_{0}^{2\pi}|\tilde g(e^{it})||h^{1/j}(r_j e^{it})-h^{1/j}(e^{it})|dt\to 0$$
as $\int_{0}^{2\pi}|g(r e^{it})-\tilde g(e^{it}))|dt\to 0$ with $r\to 1$ and by the choice of $\epsilon_j$,
$$B_j:=|\int_{|w|=1}(\tilde g(w)h^{1/j}(w)-\tilde g(w)H^{1/j}(w))dw|\to 0$$
as $\sup_{|w|=1}|H^{1/j}(w)-h^{1/j}(w)|\to 0$ for $j\to\infty$
and
$$C_j:=|\int_{|w|=1}\tilde g(w)H^{1/j}(w)dw-\int_{|w|=1)}|\tilde g(w)||dw|=$$
$$|\int_{|w|=1}(\tilde g(w)H^{1/j}(w)-\tilde g(w)H(w))dw|\to 0$$
as $l(\{w: H(w)\neq H^{1/j}(w)\})\to 0$ for $j\to \infty$.
\end{proof}
Another result we are going to use belongs to the approximation theory. Note that we don't use it in full generality, see comments right after the statement.
\begin{theo}\label{appr}(A. Davie \cite{Da}, \cite{Da1}, Zhijian Qiu \cite{Qiu})
Let $U$ be a bounded open subset of $\C$ such that each of its components is finitely connected and the complement to $U$ contains no isolated points. Then the following conditions are equivalent:
\begin{enumerate}
\item [(i)] the collection of components of $U$ is a D-collection,
\item [(ii)] $A(U)$ is strongly boundedly pointwise dense in $H^\infty(U)$: each bounded analytic function $h$ on $U$ is a pointwise limit
of a sequence $h_n\in A(U)$ with $||h_n||_{\overline{U}}\le ||h||_U$.
\end{enumerate}
\end{theo}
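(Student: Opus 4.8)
The plan is to recast the statement as a weak-$*$ density assertion and then treat the two implications separately, with a circular domain as the model case. First I would set up the functional-analytic framework. Writing $\psi_i=\psi_{\Omega_i}$ and $\Delta_i=\Delta_{\Omega_i}$, composition $g\mapsto g\circ\psi_i^{-1}$ is an isometric isomorphism of $H^\infty(\Delta_i)$ onto $H^\infty(\Omega_i)$, and $H^\infty(U)$ is the product of the $H^\infty(\Omega_i)$ with the global sup-norm. On a circular domain $\Delta$ the space $H^\infty(\Delta)$ is a dual space whose weak-$*$ topology coincides, on bounded sets, with pointwise convergence in $\Delta$ (the predual being an $L^1/H^1$-type quotient on $\partial\Delta$). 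Consequently ``strongly boundedly pointwise dense'' is exactly the assertion that the unit ball of $A(U)$ is weak-$*$ dense in the unit ball of $H^\infty(U)$; by the Krein--Smulian theorem this is equivalent to weak-$*$ density of $A(U)$, i.e.\ to triviality of its pre-annihilator. A predual element is a summable family $(\phi_i)$ with $\phi_i\in L^1(\partial\Delta_i)$, paired with $g=(g_i)$ by $\sum_i\int_{\partial\Delta_i}g_i\phi_i\,|dw|$; pushing $\phi_i\,|dw|$ forward by $\psi_i$ gives a measure $\mu=\sum_i(\psi_i)_*(\phi_i\,|dw|)$ on $\partial U$, and for $g=h\circ\psi$ the pairing becomes $\int_{\partial U}h\,d\mu$. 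Thus the theorem reduces to a clean question: for measures $\mu$ of this push-forward form, does annihilating $A(U)$ force each $\phi_i$ to be the trivial functional on $H^\infty(\Delta_i)$?

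For necessity, (ii)$\Rightarrow$(i), I would produce explicit obstructions. If for some $i$ the map $\psi_i$ is not injective on a full-measure subset of $\partial\Delta_i$, there is a positive-measure set of pairs $w_1\ne w_2$ with $\psi_i(w_1)=\psi_i(w_2)$; continuity of every $h\in A(\Omega_i)$ on $\overline{\Omega_i}$ gives $h(\psi_i(w_1))=h(\psi_i(w_2))$, and this identity passes to any bounded pointwise limit through the harmonic-measure (Poisson) representation of interior values, so the weak-$*$ closure of $A(\Omega_i)$ sits in a proper closed subspace that omits some bounded analytic function separating the identified pairs. Likewise, if $\omega_{\Omega_i}\not\perp\omega_{\Omega_j}$ for some $i\ne j$, a common boundary piece of positive harmonic measure forces continuity across $\overline{\Omega_i}\cap\overline{\Omega_j}$ on every element of $A(U)$, a constraint not met by a generic element of $H^\infty(\Omega_i)\times H^\infty(\Omega_j)$. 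In both cases weak-$*$ density fails, which is the contrapositive.

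For sufficiency, (i)$\Rightarrow$(ii), I would first reduce to a single component: given $h\in H^\infty(U)$ it suffices to approximate each $h|_{\Omega_i}$ by $h^{(i)}_n\in A(\Omega_i)$ with $\|h^{(i)}_n\|\le\|h\|_U$ and then assemble the $h^{(i)}_n$ into a single function continuous on $\overline U$. The assembly is precisely where $\omega_{\Omega_i}\perp\omega_{\Omega_j}$ is used: the overlaps $\overline{\Omega_i}\cap\overline{\Omega_j}$ are harmonic-measure null, so the per-component approximants impose no incompatible boundary values and can be glued, after a harmless cut-off near the overlaps, without inflating the norm. On a single component I would transfer to $\Delta_i$, where $g=h\circ\psi_i\in H^\infty(\Delta_i)$ is easily approximated weak-$*$ by dilations that are analytic past $\partial\Delta_i$.

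The genuine difficulty is that pushing such a dilated approximant back to $\Omega_i$ need not be continuous up to $\partial\Omega_i$ exactly at the set of prime ends that $\psi_i$ identifies, so the naive transfer does not land in $A(\Omega_i)$. I expect this step to be the main obstacle, and it is where a.e.\ injectivity of $\psi_i$ does its real work: it guarantees that the identification obstruction is carried by a set of zero length, so that a Vitushkin-type localized approximation scheme (Davie's) can replace the globally dilated function by genuinely boundary-continuous rational pieces off a negligible set while preserving the sup-norm bound. In short, the two D-collection conditions are tailored to annihilate exactly the two families of obstructions exhibited in the necessity direction, and the substance of the proof is the quantitative localized argument that upgrades ``injective off a null set'' into honest $A(\Omega_i)$ approximants.
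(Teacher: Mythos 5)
A point of comparison first: the paper does not prove Theorem \ref{appr} at all. It is quoted from Davie \cite{Da}, \cite{Da1} and Qiu \cite{Qiu}, and the only original content supplied is the chain of remarks (2)--(5) following the statement, which reduce the special case actually needed (the implication (i)$\Rightarrow$(ii) only, applied to functions supported on finitely many components) to the simply connected case of \cite{Da}, via inner cuts, the local criterion of \cite{GG}, and Lemma 2.1 of \cite{Qiu}. So your proposal attempts something the paper deliberately outsources, and it should be judged as a from-scratch proof.

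As such it has three genuine gaps. First, the reduction of ``strongly boundedly pointwise dense'' to weak-$*$ density of $A(U)$ via Krein--Smulian is not valid: weak-$*$ density of a subspace of a dual space does not imply that its unit ball is weak-$*$ dense in the ambient unit ball (that requires the subspace to be isometrically norming for the predual, by the bipolar theorem). The equivalence of (ii) with the seemingly weaker bounded pointwise density (ii'), let alone with mere weak-$*$ density, is itself the main theorem of \cite{Da1}, not an abstract functional-analytic formality. Second, the assembly step fails as described: mutual singularity of the harmonic measures $\omega_{\Omega_i}$ is a measure-theoretic condition, while membership in $A(U)$ demands continuity on $\overline U$, a topological constraint on the (possibly large) sets $\overline{\Omega_i}\cap\overline{\Omega_j}$. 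A ``harmless cut-off near the overlaps'' destroys analyticity, and per-component approximants, each continuous on $\overline{\Omega_i}$, will generically take incompatible values on a common boundary arc even when that arc is null for both harmonic measures. Third, the single-component step --- upgrading a dilated approximant on $\Delta_i$ to an element of $A(\Omega_i)$ with the same sup-norm bound --- is exactly where the entire difficulty of Davie's theorem lives, and you defer it to an unspecified ``Vitushkin-type localized scheme''; note also that a.e.\ injectivity of $\psi_i$ controls a null set in the parameter circle $\partial\Delta_i$, not a topologically or analytically negligible subset of $\partial\Omega_i$, so it does not by itself license the localization you invoke. The necessity direction (which the paper never uses) is sketched in roughly the right spirit but would likewise need an explicit annihilating measure to be constructed from the positive-measure set of identified pairs.
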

Comments on Theorem~\ref{appr} and the way we apply it:
(1) We need the implication (i)$\Rightarrow$(ii) only.
(2) By Davie \cite{Da1}, (ii) is equivalent to a seemingly weaker statement {\it (ii'): each bounded analytic function $h$ on $U$ is a pointwise limit of a bounded sequence $\{h_n\}\subset A(U)$}.
(3) Davie \cite{Da} (see preceding \cite{BW} though) proved that (i) and (ii') are equivalent when every component of $U$ is simply connected.
(4) The case when all but finitely many components of $U$ are simply connected can be reduced easily to (3) with help of a simple geometric construction (by covering every finitely connected component
by finitely many simply connected ones using only inner smooth cuts and then applying a local criterium of \cite{GG} that $A(U)$ is pointwise boundedly dense in $H^\infty(U)$; see Lemma 2.1 of \cite{Qiu} for details).
(5) We employ Theorem~\ref{appr} only to functions $h$ which are non zero on a finitely many components $U_1,\cdots, U_m$ of $U$. This case can be reduced to (4) as follows. Firstly, since $h=\sum_{i=1}^m h\chi_{U_i}$, it is enough to prove the claim for each $h\chi_{U_i}$ separately, i.e., when $h$ is non zero only on a single component, say, $U_1$. Now, if $U_{2},U_{3},\cdots$ are all other components of $U$,
let us modify $U$ to get a bigger open set $\tilde U$ roughly
by joining to each $U_j$ ($j>1$) some components of $\C\setminus U_j$ disjoint with $U_1$ to turn it into a simply connected domain;
see details in the proof of Theorem 2.1, \cite{Qiu}. Then $h\chi_{U_1}$ is bounded analytic in $\tilde U$ and
$A(\tilde U)\subset A(U)$, and we apply the case (4) to $\tilde U$.

We need the following consequence of Theorem~\ref{appr}:
\begin{coro}\label{apprcoro}
Let $V$ and $(\Omega_i)_{i=1}^N$ be as in Part 1 of Theorem~\ref{omeganu}, i.e., conditions (I)-(II) hold and
$\overline{V}$ is a A-compact. Fix a finite $m$, $1\le m\le N$. Given $\epsilon>0$ small enough, let $h$ be a bounded analytic function in $D_\epsilon:=\cup_{k=1}^m\Omega_k\setminus\overline{\Omega_{k,\epsilon}}$.
Then given a positive sequence $\sigma_n\to 0$, there is a sequence of rational functions $R_n$ with poles outside of
$E_\epsilon:=\overline V\setminus\cup_{k=1}^m\Omega_{k,\epsilon}$ such that $||R_n||_{E_\epsilon}\le ||h||_{D_\epsilon}+\sigma_n$,
$R_n(z)\to h(z)$ for every $z\in D_\epsilon$
and $R_n\to 0$ in $V\setminus\cup_{k=1}^m\overline\Omega_k$.
\end{coro}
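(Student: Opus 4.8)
The plan is to reduce the whole statement to the strong bounded pointwise approximation of Theorem~\ref{appr}, applied not to $V$ itself but to the open set obtained by excising the cores $\Omega_{k,\epsilon}$, and then to pass from the resulting $A$-functions to rational functions using the A-compactness of the excised compact. First I would set $\hat V:=V\setminus\bigcup_{k=1}^{m}\overline{\Omega_{k,\epsilon}}$, so that $\overline{\hat V}=E_\epsilon$ and $\hat V\subseteq\mathrm{Int}(E_\epsilon)$; condition~(II) will be used exactly to guarantee that a function continuous on $E_\epsilon$ and analytic on $\hat V$ is analytic on all of $\mathrm{Int}(E_\epsilon)$, i.e.\ lies in $A(E_\epsilon)$. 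The components of $\hat V$ are the finitely many collar pieces $\Omega_k\setminus\overline{\Omega_{k,\epsilon}}$ with $k\le m$ (each a disjoint union of doubly connected annuli, one per boundary curve of $\Omega_k$) together with the remaining full components $\Omega_k$, $k>m$. On $\hat V$ I define the bounded analytic function $\phi$ by $\phi=h$ on the collar $D_\epsilon$ and $\phi=0$ on every $\Omega_k$ with $k>m$, so that $\|\phi\|_{\hat V}=\|h\|_{D_\epsilon}$ and $\phi$ is nonzero on only finitely many components.

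The first main step is to verify that the components of $\hat V$ again form a D-collection, so that Theorem~\ref{appr} applies to $\hat V$ (via the comments reducing the finitely connected case to the simply connected one). Each collar annulus is bounded by analytic curves, so its uniformization extends homeomorphically to the boundary and the radial one-to-one condition is automatic, while the higher components $\Omega_k$, $k>m$, inherit condition~(I). For mutual singularity I would split the harmonic measure $\omega_{D'}$ of a collar piece $D'$ of $\Omega_k$ into the part carried by the analytic inner curve $\partial\Omega_{k,\epsilon}$ and the part carried by $\partial\Omega_k$; since $D'\subseteq\Omega_k$ shares the boundary portion lying in $\partial\Omega_k$, the maximum principle gives $\omega_{D'}|_{\partial\Omega_k}\ll\omega_{\Omega_k}$, whereas the curves $\partial\Omega_{k,\epsilon}$ are pairwise disjoint and disjoint from $\partial V$. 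Mutual singularity of all these measures then follows from the disjointness of the inner curves together with $\omega_{\Omega_k}\perp\omega_{\Omega_l}$ for $k\neq l$ supplied by~(I). Applying the implication (i)$\Rightarrow$(ii) of Theorem~\ref{appr} to $(\hat V,\phi)$, I obtain $\phi_n\in A(\hat V)$ with $\|\phi_n\|_{E_\epsilon}\le\|h\|_{D_\epsilon}$ and $\phi_n\to\phi$ pointwise on $\hat V$.

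The second main step is to realise each $\phi_n$ as a uniform limit on $E_\epsilon$ of rational functions with poles off $E_\epsilon$, i.e.\ to show $\phi_n\in R(E_\epsilon)$. For this I would prove that $E_\epsilon$ is A-compact by localisation: A-compactness is a local property (Bishop's theorem 11.8 of \cite{zalcman} together with Vitushkin \cite{Vit}); local A-compactness near the analytic curves $\partial\Omega_{k,\epsilon}$ and near the cores is classical, and near $\partial V$ one has $E_\epsilon=\overline V$ locally, so the hypothesis that $\overline V$ is A-compact transfers. By condition~(II) each $\phi_n$ lies in $A(E_\epsilon)=R(E_\epsilon)$, so for the given sequence $\sigma_n\to0$ I pick a rational $R_n$ with poles off $E_\epsilon$ satisfying $\|R_n-\phi_n\|_{E_\epsilon}\le\sigma_n$. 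Then $\|R_n\|_{E_\epsilon}\le\|h\|_{D_\epsilon}+\sigma_n$, and since $\phi_n\to\phi$ pointwise while $|R_n-\phi_n|\le\sigma_n\to0$, we conclude $R_n\to h$ on $D_\epsilon$ and $R_n\to0$ on $V\setminus\bigcup_{k=1}^{m}\overline{\Omega_k}$, which is exactly the assertion.

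I expect the D-collection verification for $\hat V$—specifically the mutual singularity of the harmonic measures once the thin collar annuli are introduced, which rests on the subdomain comparison $\omega_{D'}|_{\partial\Omega_k}\ll\omega_{\Omega_k}$ and on~(I)—to be the principal obstacle, with the A-compactness of $E_\epsilon$ via localisation a close second; the passage from $\phi_n$ to $R_n$ and the bookkeeping of the three conclusions are then routine.
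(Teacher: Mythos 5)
Your proposal follows essentially the same route as the paper: extend $h$ by zero to $V_\epsilon=V\setminus\bigcup_{k=1}^m\overline{\Omega_{k,\epsilon}}$, invoke Theorem~\ref{appr} (via the reduction to finitely many nonzero components) to get $A(V_\epsilon)$-approximants with the norm bound, use condition (II) to place them in $A(E_\epsilon)$, and use the A-compactness of $E_\epsilon$ (deduced from that of $\overline V$ by localization/Vitushkin) to replace them by rational functions within $\sigma_n$. One small inaccuracy: the collar annuli are \emph{not} ``bounded by analytic curves'' --- their outer boundary is a component of $\partial\Omega_k$, which is not analytic in general --- but the a.e.\ one-to-one radial extension there still holds because the collar's uniformizer is a restriction of $\psi_{\Omega_k}$ and condition (I) applies, so your D-collection verification (which the paper leaves implicit) goes through.
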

\begin{proof}
Since $\overline V$ is a A-compact and the boundary of $E_\epsilon$ is the disjoint union of $\partial V$ and a finitely many analytic curves $\partial\Omega_{k,\epsilon}$, $1\le k\le m$, then $E_{\epsilon}$ is a A-compact as well. This follows e.g. from Vitushkin's theorem~\cite{Vit}.
Now, given $\epsilon>0$, $\{\sigma_n\}$ and $h$ as in the condition of the statement, let us extend $h$ from $D_\epsilon$
to a (bounded analytic) function $h^1$ in
$V_\epsilon:=\cup_{k=1}^N\Omega_k\setminus\cup_{k=1}^m\overline{\Omega_{k,\epsilon}}$ by defining $h^1=0$ on $\cup_{k=m+1}^N \Omega_k$. In view of condition (I), by Theorem~\ref{appr}, there is a sequence $h_n\in A(V_\epsilon)$
such that $h_n(z)\to h^1(z)$ as $n\to \infty$ for all
$z\in V_\epsilon$ and $||h_n||_{\overline{V_\epsilon}}\le ||h^1||_{V_\epsilon}=||h||_{D_\epsilon}$. By the condition (II), $h_n\in A(\overline V_\epsilon)=A(E_\epsilon)$.
Since $E_\epsilon$ is a A-compact, for each $n$ there is a rational function $R_n$ with poles outside of $E_\epsilon$ such that $||R_n-h_n||_{E_{\epsilon}}<\sigma_n$. The sequence
$R_n$ is as required. Indeed, for any $z\in E_{\epsilon}$, $|R_n(z)-h^1(z)|\le |R_n(z)-h_n(z)|+|h_n(z)-h^1(z)|<\sigma_n+|h_n(z)-h^1(z)|$, hence, $\{R_n\}$ tends to $h^1(z)=h(z)$ on $D_\epsilon$ 
and to $h^1(z)=0$ on $\cup_{k=m+1}^N\Omega_k$. At the same time, $||R_n||_{E_\epsilon}\le ||h_n||_{E_{\epsilon}}+\sigma_n\le
||h||_{D_\epsilon}+\sigma_n$.
\end{proof}
\subsection{Proof of Theorem~\ref{omeganu}}
Let us start with a more difficult Part 1: assume that there is a measure $\nu$ on $E$ such that (\ref{omegahatnu}) holds and then prove (b).
First of all, since $\hat\nu=0$ off $E\cup\overline V$ and $E$ is a C-compact by condition III(ii), Lemma \ref{local} immediately tells us that $\nu=0$ on $E\setminus\partial V$. In other words, one can assume
from the beginning that
$$E=\partial V.$$
Note at this point that since $\overline V$ is a A-compact by III(i), its boundary $E$ is a C-compact, see \cite{gamelin}, p.227.

Now, let $\tilde C=C_g$ be the constant guaranteed by Theorem~\ref{H}, for the compact $E=\partial V$ and the function $g=g_\kappa$ where
\begin{equation}\label{gkappa}
g_\kappa(z)=\left\{\begin{array}{ll}
\kappa_i(z) & \mbox{ if } z\in\Omega_i, \mbox{ for each } i \\
0 & \mbox{ if } z\in\C\setminus \overline V
\end{array}
\right.
\end{equation}
Let us fix a collection of uniformizations $\psi_{\Omega_k}:\Delta_{\Omega_k}\to\Omega_k$,
where the circular domains $\Delta_{\Omega_k}$ are pairwise disjoint with their closures.
Write $\psi_k=\psi_{\Omega_k}$, $\Delta_k=\Delta_{\Omega_k}$.
Let
$$\tilde\kappa_k(w)=\kappa_k(\psi_k(w))\psi_k'(w)$$
for $w\in \Delta_k$.
Let us fix an arbitrary finite $m$, $1\le m\le N$, and prove the following

{\bf Claim}. {\it For every function $\tilde h$ which is holomorphic in a small enough neighborhood of $\Gamma:=\cup_{k=1}^m\partial\Delta_k$ and every $\epsilon>0$ small enough,
\begin{equation}\label{pr2}
|\sum_{k=1}^m\int_{\Gamma_{k,\epsilon}}\tilde\kappa_k(w)\tilde h(w)dw|\le \tilde C||\tilde h||_{\Gamma}
\end{equation}
where $\Gamma_{k,\epsilon}=\{w\in\Delta_k: \dist(w,\partial \Delta_k)=\epsilon\}$.}

{\it Proof of the Claim.} Fix any $\epsilon>0$ such that $\tilde h$ is holomorphic in a neighborhood of the set
$\cup_{k=1}^m\{w\in\Delta_k: \dist(w,\partial \Delta_k)\le\epsilon\}$. Then, for each $\tilde\epsilon\in (0,\epsilon)$ and $k\in\{1,\cdots,m\}$,
\begin{equation}\label{100}
\int_{\Gamma_{\tilde\epsilon}}\tilde\kappa_k(w)\tilde h(w)dw=\int_{\Gamma_\epsilon}\tilde\kappa_k(w)\tilde h(w)dw.
\end{equation}
Let $h=\tilde h\circ\phi_k$ where $\phi_k=\psi_k^{-1}:\Omega_k\to \Delta_k$. Let $\epsilon_0\in (\tilde\epsilon, \epsilon)$.
Then $h$ is defined in
$D_{\epsilon_0}=\cup_{k=1}^m\Omega_k\setminus\overline\Omega_{k,\epsilon_0}$. Moreover, $h$ is holomorphic and bounded in $D_{\epsilon_0}$.
Let $E_{\epsilon_0}=\overline V\setminus\cup_{k=1}^m\Omega_{k,\epsilon_0}$.
Fix a positive sequence $\sigma_n\to 0$ and, by Corollary~\ref{apprcoro}, find a sequence of rational functions $R_n$ with poles outside of $E_{\epsilon_0}$ such that $R_n(z)\to h(z)$ for all $z\in D_{\epsilon_0}$,
$R_n(z)\to 0$ for all $z\in\cup_{k=m+1}^N\Omega_k$ and $||R_n||_{E_{\epsilon_0}}\le ||h||_{D_{\epsilon_0}}+\sigma_n$ for all $n$.
Hence, for $k=1,\cdots,m$ and fixed $\tilde\epsilon$,
\begin{equation}\label{101}
\int_{\Gamma_{\tilde\epsilon}}\tilde\kappa_k(w)\tilde h(w)dw=\int_{\partial\Omega_{k,\tilde\epsilon}}\kappa_k(z)h(z)dz=
\lim_{n\to \infty}\int_{\partial\Omega_{k,\tilde\epsilon}}\kappa_k(z)R_n(z)dz.
\end{equation}
Consider a finite collection of closed analytic curves $A_{\tilde\epsilon}:=\{\partial\Omega_{k,\tilde\epsilon}\}_{k=1}^m$.
Let us complete it by a finite collection $B_{\tilde\epsilon}$ of another pairwise disjoint closed analytic curves in $\C\setminus
\cup_{k=1}^m\overline\Omega_k$ so that all together $A_{\tilde\epsilon}\cup B_{\tilde\epsilon}=\partial U_{\tilde\epsilon}$ where $U_{\tilde\epsilon}$ is a (small) neighborhood of $E$.
Note that $R_n\to h$ on $A_{\tilde\epsilon}$, $R_n\to 0$ on $B_{\tilde\epsilon}\cap\cup_{k=m+1}^N\Omega_k$ while $\hat\nu=0$ on $B_{\tilde\epsilon}\cap(\C\setminus\overline V)$. Therefore, by the choice of $\tilde C$,
$$\lim_{n\to\infty}|\sum_{k=1}^m\int_{\partial\Omega_{k,\tilde\epsilon}}\kappa_k(z)R_n(z)dz|
=\lim_{n\to\infty}|\int_{\partial U_{\tilde\epsilon}}g_\kappa(z)R_n(z)dz|
\le\limsup_n\tilde C||R_n||_{E_{\epsilon_0}}\le$$
$$\lim_n\tilde C(||h||_{D_{\epsilon_0}}+\sigma_n)=\tilde C||h||_{D_{\epsilon_0}}=\tilde C||\tilde h||_{\cup_{k=1}^m\{w\in\Delta_k: \dist(w,\partial\Delta_k)\le\epsilon_0\}}.$$
Thus, by the latter inequality along with (\ref{100}) and (\ref{101}), for every $\epsilon_0\in (0,\epsilon)$,
$$|\sum_{k=1}^m\int_{\Gamma_\epsilon}\tilde\kappa_k(w)\tilde h(w)dw|\le \tilde C||\tilde h||_{\cup_{k=1}^m\{w\in\Delta_k: \dist(w,\partial\Delta_k)\le\epsilon_0\}}.$$
But $\lim_{\epsilon_0\to 0}||\tilde h||_{\{w\in\Delta_k: \dist(w,\partial\Delta_k)\le\epsilon_0\}}=||\tilde h||_{\partial{\Delta_k}}$ because $\tilde h$ is continuous up to $\partial\Delta_k$. This proves (\ref{pr2}) and the Claim.

We proceed as follows. Since the closures of $\Delta_k$, $1\le k\le m$ (where $m$ is finite) are pairwies disjoint, given $k\in\{1,\cdots,m\}$,
the Claim immediately implies that the condition of Lemma \ref{Tcoro} holds where $\Delta=\Delta_k$ and $g=\tilde\kappa_k(w)$. We conclude there exist a measure $\eta_k$ which is absolutely continuous w.r.t. the Lebesgue on $\partial\Delta_k$, a sequence of functions $\{h_{k,j}\}_{j=1}^\infty$ locally analytic near $\partial\Delta_k$ and a sequence $\epsilon_j\to 0$ such that $||h_{k,j}||_{\partial\Delta_k}\to 1$ as $j\to\infty$ and
\begin{equation}\label{supk}
\lim_{j\to\infty}\int_{\Gamma_{k,\epsilon_j}}\kappa_k(\psi_k(w))\psi_k'(w)h_{k,j}(w)dw=||\eta_k||=||\kappa_k||,
\end{equation}
where $||\kappa_k||=\limsup_{\epsilon\to 0}\int_{\partial\Omega_{k,\epsilon}}|\kappa_k(z)||dz|$.
Now, apply the Claim with $\tilde h_j$ to be $\tilde h_{k,j}$ near $\partial\Delta_k$, $1\le k\le m$.
By Lemma~\ref{Tcoro} and the Claim:
$$\sum_{k=1}^m||\kappa_k||=\sum_{k=1}^m||\eta_k||=\sum_{k=1}^m\limsup_{\epsilon\to 0}\int_{\Gamma_{k,\epsilon}}|\kappa_k(\psi_k(w))\psi_k'(w)||dw|\le \tilde C.$$
Since $\tilde C$ is {\it independent} on $m$, this proves (\ref{concl2}), that is,
$\sum_{k=1}^N ||\kappa_k||<\infty$.
This allows us to finish easily the proof that (a) implies (b) as follows.
For each finite $k$, $1\le k\le N$, by Lemma~\ref{omegakappa1} (with $\Omega=\Omega_k$ and $\kappa=\kappa_k$) there is a measure $\nu^{\kappa_k}$ such that $\hat\nu^{\kappa_k}=\kappa_k$ in $\Omega_k$ and $\hat\nu^{\kappa_k}=0$ off $\overline{\Omega_k}$, moreover, $\nu^{\kappa_k}\ll\omega_{\Omega_k}$
and $||\nu_{\kappa_k}||=||\kappa_k||$.
Define $\nu_w=\sum_{k=1}^N\nu^{\kappa_k}$. Then $||\nu_w||=\sum_{k=1}^N ||\nu^{\kappa_k}||<\infty$.
Since by condition (I) harmonic measures of different $\Omega_k$ are singular, $\{\nu^{\kappa_k}\}_{k=1}^N$ are pairwise singular as well.
Now,
$\hat\nu_w=\sum_{k=1}^N\hat\nu^{\kappa_k}$ is equal to $\kappa_k$ in $\Omega_k$ for each $k$ and $0$ off $\overline{V}$.
Let us compare measures
$\nu$ and $\nu_\omega$. For the difference measure $\tau=\nu-\nu_\omega$, we have: $\hat\tau=0$ off $E$
where $E$ is a C-compact.
Hence, $\tau=\nu-\nu_\omega=0$, and we are done with the implication (a) implies (b). In fact, above considerations prove Part 2, too, using again Lemma \ref{omegakappa1}.

\subsection*{Acknowledgements}
We would like to thank
Alexander Volberg for communicating Example \ref{sv} and for useful comments, and the referee for the reference \cite{kha1}. The author acknowledges the support of ISF Grant No. 1226/17.

\normalsize

\end{document}